\documentclass[11pt, a4paper]{article}

\usepackage{geometry}                		
\geometry{left=2.0cm, right=2.0cm, top = 2cm, bottom=2cm}

\usepackage{amsmath, amsfonts, amssymb, amsthm, bm}
\usepackage{graphicx, subfigure, tikz, booktabs}
\usepackage{epstopdf}
\usepackage{cleveref}
\ifpdf
  \DeclareGraphicsExtensions{.eps,.pdf,.png,.jpg}
\else
  \DeclareGraphicsExtensions{.eps}
\fi


\newtheorem{theorem}{Theorem}
\newtheorem{problem}{Problem}
\newtheorem{definition}{Definition}

\begin{document}

\title{Recovering the sources in the stochastic wave equations from multi-frequency far-field patterns}


\author{Yan Chang\thanks{School of Mathematics, Harbin Institute of Technology, Harbin, P. R. China. ({21B312002@stu.hit.edu.cn}).},
            Yukun Guo\thanks{School of Mathematics, Harbin Institute of Technology, Harbin, P. R. China. ({ykguo@hit.edu.cn}).},
	   Zhipeng Yang\thanks{School of Mathematics and Statistics, Lanzhou University, Lanzhou, P. R. China. ({yangzp@lzu.edu.cn}).},
	  and Yue Zhao\thanks{School of Mathematics and Statistics, and Key Lab NAA–MOE, Central China Normal University, Wuhan, P. R. China. ({zhaoy@ccnu.edu.cn}).}}
	  
\date{}

\maketitle

\begin{abstract}
   This paper concerns the inverse source scattering problems of recovering random sources for acoustic and elastic waves. The underlying sources are assumed to be random functions driven by an additive white noise. The inversion process aims to find the essential statistical characteristics of the mean and variance from the radiated random wave field at multiple frequencies. To this end, we propose a non-iterative algorithm by approximating the mean and variance via the truncated Fourier series. Then, the Fourier coefficients can be explicitly evaluated by sparse far-field measurements, resulting in an easy-to-implement and efficient approach for the reconstruction. Demonstrations with extensive numerical results are presented to corroborate the feasibility and robustness of the proposed method.
\end{abstract}

{\bf Keywords}\ Inverse source problem, stochastic wave equations, far-field, multi-frequency, Fourier method


\section{Introduction}

It is widely acknowledged that random phenomena occur naturally in wavefield propagation \cite{Evans13}. The mechanical fields, such as acoustic or elastic waves, physically originate from certain energy-input excitation. In principle, the output of the excitation source can not be absolutely stable or deterministic because the oscillation of the source is inevitably affected by unpredictable perturbations. In such scenarios, random sources emanate random wavefields with inescapable uncertainties. Hence, wave-detecting devices would always record this indeterministic data. Conversely, inferring the inaccessible information of the unknown source from the radiating statistical data formulates the inverse stochastical source problems (ISSP). 

The ISSPs have significant applications in diverse areas, encompassing geophysical exploration, antenna design and synthesis, medical imaging, and nondestructive testing \cite{WAC03, A99, B01, BGRS19, CCM21, HFG21, MLO21}. Nevertheless, due to the inherent randomness and ill-posedness, precisely formulating the ISSPs and developing effective algorithms are both challenging \cite{LLW22, KS05}. For instance, in comparison with the deterministic settings, in stochastic inverse problems, the source may be extremely rough to hinder the pointwise existence of the solution. Therefore, the solution to stochastic problems should be usually understood in the sense of distribution. Moreover, owing to its stochastic nature, a single solution to the inverse problems for a particular realization of the randomness is typically purposeless. Instead, reconstructing the target source's pivotal statistical characteristics, such as the mean and variance, is practically more valuable in quantifying the uncertainties. Ensuring high-quality evaluation of these statistical indices relies severely on the amount of data, computational resources, and efficient algorithms.

Remarkable progress has been made in stochastic inverse scattering problems, ranging from theoretical analysis to numerical approaches. It is theoretically demonstrated in \cite{LLW22} by ergodicity that the principal symbols of the covariance and relation operators can be uniquely determined by a single realization of the far-field pattern averaged over the frequency band with probability one. Several attempts to resolve the one-dimensional stochastic inverse source problems can be found in \cite{10, 6, 27}, where the governing equations are stochastic ordinary differential equations. Note that analogously extending these methods to the multi-dimensional case is non-trivial. Later, concerning the inverse source problem for the 2D and 3D stochastic Helmholtz equation, Bao et al. \cite{BCL16, BCL17} developed a regularized Kaczmarz method by resorting to multifrequency scattering data to reconstruct the mean and the variance of the stochastic source. Recently, \cite{LL24} shows that the ISSPs at a fixed frequency generally lack stability, namely, a mild variation in the data may result in drastic inaccuracies in the reconstruction. We also refer to \cite{LLM19, LLM21} for studies on the inverse source problems of the random Schr\"{o}dinger equations.
 
In the current study, we investigate the inverse source problem for the stochastic acoustic and elastic wave equations. The source is assumed to be a random function driven by a white noise whose correlation function is the delta function. The inverse problem is then reformulated as the recovery of the mean and variance of the random source. This work aims to develop a unified and effective numerical scheme to reconstruct the random sources of the acoustic and elastic waves from the far-field pattern, which, to our knowledge, has not been tackled in the literature.

Based on the Fourier expansions, we propose a novel numerical scheme to recover the source from the multi-frequency data. Our study is motivated by the Fourier method for reconstructing the various deterministic source functions in acoustics \cite{IP15, IP17}, electromagnetics \cite{JDE18}, and elasticity mechanics \cite{IPI22, JCAM19, CGYZ24, CGZ24}. The key idea of the Fourier method is to approximate the source function by a Fourier expansion and then establish an explicit computational formula of the Fourier coefficients. Then, the source function can be approximated by truncated Fourier expansion once the Fourier coefficients are determined. A notable advantage of this method is that the Fourier coefficients can be derived directly from the measured radiated field by choosing appropriate admissible frequencies, and no iterative solver is involved in the reconstruction.

Specifically, we propose a non-iterative Fourier-based method to reconstruct the mean and variance of the random source from the expectation and correlation of the acoustic far-field pattern, respectively. In our scheme, the Fourier coefficients of the statistical quantities can be computed directly from the same statistics of the far-field pattern. Superior to the utilization of near-field data, our method only requires limited observation directions for each frequency, so that the total measurement directions are sparse. For the elastic case, the inverse random source problem is more challenging due to the coexistence and interaction of compressional and shear waves propagating at different speeds. To untangle these components and compute the Fourier coefficients, we employ linear combinations of correlations of far-field patterns of compressional and shear waves at appropriately chosen frequencies. In summary, by utilizing the statistical property of the random source, we establish the relation between the Fourier coefficients and the corresponding stochastical measurements, which enables us to recover the deterministic and stochastic components of the source, respectively. Moreover, the unified framework applies stably and efficiently to both acoustic and elastic models. 

The rest of this paper is organized as follows: In the next section, we are concerned with the acoustic problem and develop the Fourier method to reconstruct the source in the stochastic wave equation. Section 3 is devoted to the inverse stochastic source problem for the elastic wave equation. Numerical experiments are conducted in corresponding sections to illustrate the performance of the proposed method. Several conclusion remarks are given in Section \ref{sec: conclusion}.


\section{Acoustic waves}

This section starts with the acoustic model for the direct and inverse stochastic source problems. Then the Fourier method is developed to recover the stochastic source term. Numerical examples will also be included to verify the performance of the proposed method.

\subsection{Problem formulation}
Consider the following two-dimensional Helmholtz equation
\begin{equation}\label{eq: Helmholtz}
    \Delta u+k^2u=-f\quad \text{in}\ \mathbb{R}^2,
\end{equation}
 where $k>0$ is the wavenumber, $u$ is the radiated scalar field. The source current density $f$ in \eqref{eq: Helmholtz} is assumed to be a random function driven by an additive white noise which has the form 
\begin{align}\label{eq: f}
 f=g+\sigma\dot{W}_x.
\end{align}
 Here, $g$ and $\sigma\ge0$ are two deterministic real functions independent of the wavenumber $k$ and both are compactly supported in a square $\Omega\subset\mathbb{R}^2$, $\dot{W}_x$ is a homogeneous white noise with $W(x)$ being the one-dimensional two-parameter Brownian sheet. Here the white noise $\dot{W}_x$ can be viewed as the derivative of $W(x)$.  In addition, the radiated field $u$ is required to satisfy the Sommerfeld radiated condition
\begin{align}\label{eq: Sommerfeld}
   \lim\limits_{r=|x|\to\infty}\sqrt{r}(\partial_r u-\mathrm{i}ku)=0,
\end{align}
 uniformly in all directions $\widehat{x}=x/|x|\in \mathbb{S}$ with $\mathbb{S}=\left\{x\in\mathbb{R}^2:|x|=1\right\}$.
 
Under the assumption that the random source is driven by the incoherent white noise, the variance can be reconstructed. 
 Recall the fundamental solution to the Helmholtz equation 
\begin{align}\label{eq: Phi}
     G_k(x, y)=\frac{\mathrm{i}}{4}H_0^{(1)}(k|x-y|),
\end{align}
where $H_0^{(1)}$ is the Hankel function of the first kind of order zero. Then the mild solution to the Helmholtz equation \eqref{eq: Helmholtz} under the radiation condition \eqref{eq: Sommerfeld} at wavenumber $k$ is given by
\begin{align}\label{eq: radiatedAcoustic}
    u(x; k)=\int_\Omega G_k(x, y)g(y)\mathrm{d}y+\int_\Omega G_k(x, y)\sigma(y)\mathrm{d}W_y.
\end{align}
 
Noticing the asymptotic behavior of the Hankel function \cite{CK19}, the radiated field \eqref{eq: radiatedAcoustic} admits the following asymptotic expansion 
\begin{align}\label{eq: acoustic_asymptotic}
 u(x; k)=\frac{\mathrm{e}^{\mathrm{i}k|x|}}{\sqrt{|x|}}\left\{u^\infty(\widehat{x}; k)+\mathcal{O}\left(\frac{1}{|x|}\right)\right\},\quad |x|\to \infty,
\end{align}
 which holds uniformly with respect to all directions $\widehat{x}=x/|x|$. In \eqref{eq: acoustic_asymptotic}, $u^\infty$ is called the far-field pattern, whose definition is given by
 \begin{align}\label{eq: acoustic_far}
    u^\infty(\widehat{x}; k)=\int_\Omega G_k^\infty(\widehat{x}, y)g(y)\mathrm{d}y+
 \int_\Omega G_k^\infty(\widehat{x}, y)\sigma(y)\mathrm{d}W_y.
 \end{align}
 Here
 \begin{align*}
   G_k^\infty(\widehat{x},y)=\frac{\mathrm{e}^{\mathrm{i}\pi/4}}{\sqrt{8\pi k}}\mathrm{e}^{-\mathrm{i}k\widehat{x}\cdot y},\quad \widehat{x}\in\mathbb{S},
 \end{align*}
 is the far-field pattern to the fundamental solution \eqref{eq: Phi}.

 Taking the expectation on both sides of \eqref{eq: acoustic_far} and using the property (see \cite{BCL16})
 \begin{align*}
 \mathbf{E}\left[\int_\Omega G_k^\infty(\widehat{x},y)\sigma(y)\mathrm{d}W_y
 \right]=0,
 \end{align*}
 we derive that
\begin{align}\label{eq: Efar}
  \mathbf{E}\left[u^\infty(\widehat{x};k)\right]=\int_\Omega G_k^\infty(\widehat{x},y)g(y)\mathrm{d}y,
\end{align}
which can be used to reconstruct the mean $g.$ Meanwhile, from the above equation, we know that once the boundary expectation data of the radiated wave field is available, the stochastic inverse problem can be reformulated as the deterministic inverse problem. 

We denote the covariance of random variables $u$ and $v$ by $\mathbf{C}[u, v]=\mathbf{E}[(u-\mathbf{E}[u])(v-\mathbf{E}[v])]$.
Take some small $k_0>0$, then for each $\tau>0,$ we can calculate the covariance $\mathbf{C}\left[u^\infty(\widehat{x}; k_0+\tau),u^\infty(\widehat{x};k_0)\right]$ on both sides of \eqref{eq: acoustic_far}. By taking the expectation $\mathbf{E}\left[u^\infty(\widehat{x};k)\right]$ and the covariance $\mathbf{C}\left[u^\infty(\widehat{x};k_0+\tau),u^\infty(\widehat{x};k_0)\right]$ to be the measurements, we now propose the inverse scattering problem (ISP) as follows:
\begin{problem}
Choose two sets of wavenumbers $\{k\}$ and $\{\tau\},$ and take some small wavenumber $k_0>0.$ Then the ISP under consideration is to determine the mean $g$ and the variance $\sigma^2$ of the source function from  the multi-frequency far-field statistical measurements 
$$
  \left\{\mathbf{E}\left[u^\infty(\widehat{x};k)\right],
\mathbf{C}\left[u^\infty(\widehat{x}; k_0+\tau),u^\infty(\widehat{x};k_0)\right]\right\}.
$$
\end{problem}

Before delving into the implementation of inversion, we shall introduce several notations and the relevant Sobolev spaces. Without loss of generality, let $a>0$ and define 
$$
   \Omega=\left(-\frac{a}{2},\frac{a}{2}\right)\times\left(-\frac{a}{2},\frac{a}{2}\right)
$$
such that $\mathrm{supp}\, g\subset\Omega$ and $\mathrm{supp}\,\sigma \subset\Omega$. For each $s>0,$ we define the periodic Sobolev space $H^s(\Omega)$ by 
$$
H^s(\Omega)=\left\{v\in L^2(\Omega):\|v\|_s<\infty\right\},
$$
where the norm $\|\cdot\|_s$ is defined by 
$$
\|v\|_s=\left(
\sum_{\bm{l}\in\mathbb{Z}^2}\left(1+|\bm{l}|^2\right)^s|\widehat{v}_{\bm{l}}|^2
\right)^{1/2},
$$
with $\widehat{v}_{\bm{l}}$ being the Fourier coefficients of $v$:
\begin{align}\label{eq: hatv}
\widehat{v}_{\bm{l}}=\frac{1}{a^2}\int_{\Omega}v(x)\overline{\phi_{\bm{l}}(x)}\mathrm{d}x.
\end{align}
In \eqref{eq: hatv}, the overbar denotes the complex conjugate, and $\phi_{\bm{l}}(x)$ is the Fourier basis function defined by
\begin{align}\label{eq: basis}
  \phi_{\bm{l}}(x) =\mathrm{e}^{\mathrm{i}\frac{2\pi}{a}\bm{l}\cdot x},\quad\bm{l}\in\mathbb{Z}^2.
\end{align}

Now, the mean function $g$ and the variance function $\sigma^2$ can be expanded through the Fourier series
\begin{align}\label{eq: g}
  g(x)=\sum_{\bm{l}\in\mathbb{Z}^2}\widehat{g}_{\boldsymbol l}\phi_{\boldsymbol l}(x), \\\label{eq: sigma}
  \sigma^2(x)=\sum_{\bm{l}\in\mathbb{Z}^2}\widehat{\sigma}_{\boldsymbol l}\phi_{\boldsymbol l}(x).
\end{align}
with $\widehat{g}_{\boldsymbol l}, \widehat{\sigma}_{\boldsymbol l}, $ ${\boldsymbol l}\in\mathbb{Z}^2$ being the Fourier coefficients.

So far, we have represented the mean function $g$ and the variance function $\sigma^2$ by the Fourier expansion. Once the Fourier coefficients $\widehat{g}_{\bm l}, \widehat{\sigma}_{\bm l}, $ ${\bm l}\in\mathbb{Z}^2$ are available, we immediately obtain the reconstruction formulas. The computations of the Fourier coefficients will be given in the following two subsections.
 
\subsection{Reconstruct the mean function $g$}
 
This subsection aims to reconstruct the mean function $g$ from multifrequency far-field mean values $\mathbf{E}[u^\infty(\widehat{x};k)]$. Following \cite{IP17}, we introduce the following definition of admissible wavenumbers.

\begin{definition}[Admissible wavenumber to reconstruct the mean function]\label{def: admissible_mean}
   Take a sufficiently small positive constant $\lambda>0$ and denote 
   $$
       \bm{l}_0=(\lambda, 0),
   $$
   then the admissible wavenumber is defined through 
   \begin{align}\label{eq: admissible_mean}
   k_{\bm l}:=
   \begin{cases}
        \dfrac{2\pi}{a}|\bm{l}|, &\bm{l}\in\mathbb{Z}^2\backslash\{\bm{0}\},\vspace{2mm}\\
        \dfrac{2\pi}{a}\lambda, &\bm{l}=\bm{0}.
   \end{cases}
   \end{align}
   The corresponding observation directions are defined by 
   \begin{align}\label{eq: xhat}
   \widehat{x}_{\bm{l}}:=
   \begin{cases}
        \dfrac{\bm l}{|\bm l|}, &{\bm l}\in\mathbb{Z}^2\backslash\{\bm 0\},\\
        (1, 0), & {\bm l}={\bm 0}.
   \end{cases}
   \end{align}
 \end{definition}

Based on \Cref{def: admissible_mean}, we have the following computational formulas for the Fourier coefficients.
 \begin{theorem}
 For $\bm l\in \mathbb{Z}^2$, let the wavenumber $k_{\bm l}$ and the observation direction $\widehat{x}_{\bm l}$ be defined by \eqref{eq: admissible_mean} and \eqref{eq: xhat}, respectively. Then the Fourier coefficients $\{\widehat{g}_{\bm l}\}$ of $g$ in \eqref{eq: g} can be determined by $\mathbf{E}[u^\infty(\widehat{x}_{\bm l}; k_{\bm l})]$.
\end{theorem}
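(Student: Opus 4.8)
The idea is to exploit the explicit formula \eqref{eq: Efar} for the expectation of the far-field pattern, namely
$$
 \mathbf{E}\left[u^\infty(\widehat{x};k)\right]=\int_\Omega G_k^\infty(\widehat{x},y)g(y)\,\mathrm{d}y
 =\frac{\mathrm{e}^{\mathrm{i}\pi/4}}{\sqrt{8\pi k}}\int_\Omega \mathrm{e}^{-\mathrm{i}k\widehat{x}\cdot y}g(y)\,\mathrm{d}y,
$$
and to match the phase $\mathrm{e}^{-\mathrm{i}k\widehat{x}\cdot y}$ against the Fourier basis $\overline{\phi_{\bm l}(y)}=\mathrm{e}^{-\mathrm{i}\frac{2\pi}{a}\bm l\cdot y}$ appearing in the definition \eqref{eq: hatv} of $\widehat{g}_{\bm l}$. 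First I would observe that, for $\bm l\neq\bm 0$, the admissible choice $k_{\bm l}=\frac{2\pi}{a}|\bm l|$ together with $\widehat{x}_{\bm l}=\bm l/|\bm l|$ gives $k_{\bm l}\,\widehat{x}_{\bm l}=\frac{2\pi}{a}\bm l$, so that $\mathrm{e}^{-\mathrm{i}k_{\bm l}\widehat{x}_{\bm l}\cdot y}=\overline{\phi_{\bm l}(y)}$ exactly. Substituting into the formula above, and using that $g$ is supported in $\Omega$, yields
$$
 \mathbf{E}\left[u^\infty(\widehat{x}_{\bm l};k_{\bm l})\right]
 =\frac{\mathrm{e}^{\mathrm{i}\pi/4}}{\sqrt{8\pi k_{\bm l}}}\int_\Omega g(y)\overline{\phi_{\bm l}(y)}\,\mathrm{d}y
 =\frac{\mathrm{e}^{\mathrm{i}\pi/4}\,a^2}{\sqrt{8\pi k_{\bm l}}}\,\widehat{g}_{\bm l},
$$
which is the desired explicit relation; one then simply solves for $\widehat{g}_{\bm l}$.

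Next I would handle the degenerate mode $\bm l=\bm 0$ separately, since $k_{\bm 0}=0$ is not an admissible wavenumber for the Helmholtz equation and the prefactor $1/\sqrt{8\pi k}$ blows up. Here the definition \eqref{eq: admissible_mean} uses $k_{\bm 0}=\frac{2\pi}{a}\lambda$ with a small $\lambda>0$ and $\widehat{x}_{\bm 0}=(1,0)$, so that $\mathrm{e}^{-\mathrm{i}k_{\bm 0}\widehat{x}_{\bm 0}\cdot y}=\mathrm{e}^{-\mathrm{i}\frac{2\pi}{a}\lambda\, y_1}=\overline{\phi_{\bm l_0}(y)}$ with $\bm l_0=(\lambda,0)$. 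Thus the same computation gives
$$
 \mathbf{E}\left[u^\infty(\widehat{x}_{\bm 0};k_{\bm 0})\right]
 =\frac{\mathrm{e}^{\mathrm{i}\pi/4}\,a^2}{\sqrt{8\pi k_{\bm 0}}}\,\widehat{g}_{\bm l_0},
$$
and since $\lambda$ is small, $\widehat{g}_{\bm l_0}\approx\widehat{g}_{\bm 0}$ by continuity of $\xi\mapsto\int_\Omega g(y)\mathrm{e}^{-\mathrm{i}\frac{2\pi}{a}\xi\cdot y}\mathrm{d}y$ (i.e.\ of the Fourier transform of the compactly supported $g$); so $\widehat{g}_{\bm 0}$ is recovered, up to the controllable approximation error built into Definition \ref{def: admissible_mean}.

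The argument is essentially a direct computation, so there is no deep obstacle; the one point requiring care is precisely the $\bm l=\bm 0$ mode, where an exact identity is unavailable and one must instead argue that the continuity of $\widehat{g}$ as a function of the (now real-valued) frequency variable makes the surrogate evaluation at $\bm l_0=(\lambda,0)$ a legitimate proxy. It is also worth noting for completeness that $g$ being real-valued is not needed for this theorem — it is used later for the variance — and that the formula is stable because $k_{\bm l}\to\infty$ with $|\bm l|$ keeps the inversion factor $\sqrt{8\pi k_{\bm l}}/(\mathrm{e}^{\mathrm{i}\pi/4}a^2)$ bounded away from zero. I would close by writing out the explicit reconstruction formula $\widehat{g}_{\bm l}=\frac{\sqrt{8\pi k_{\bm l}}}{\mathrm{e}^{\mathrm{i}\pi/4}a^2}\,\mathbf{E}[u^\infty(\widehat{x}_{\bm l};k_{\bm l})]$ for all $\bm l\in\mathbb{Z}^2$ (with the convention $k_{\bm 0},\widehat{x}_{\bm 0}$ as above), which combined with \eqref{eq: g} gives the truncated-Fourier reconstruction of $g$.
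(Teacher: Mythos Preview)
Your treatment of the nonzero modes $\bm l\neq\bm 0$ is exactly the paper's argument. The difference is in the zero mode. You recover only the surrogate coefficient $\widehat g_{\bm l_0}$ and then invoke continuity to say $\widehat g_{\bm l_0}\approx\widehat g_{\bm 0}$ for small $\lambda$; this yields an approximation, not the exact determination the theorem asserts, and your closing formula $\widehat g_{\bm l}=\frac{\sqrt{8\pi k_{\bm l}}}{\mathrm e^{\mathrm i\pi/4}a^2}\mathbf E[u^\infty(\widehat x_{\bm l};k_{\bm l})]$ is therefore not correct at $\bm l=\bm 0$.

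The paper instead treats the $\bm l=\bm 0$ measurement as one linear equation in all the Fourier coefficients and solves it exactly. Writing $g=\widehat g_{\bm 0}+\sum_{\bm l\neq\bm 0}\widehat g_{\bm l}\phi_{\bm l}$ inside $\int_\Omega g(y)\overline{\phi_{\bm l_0}(y)}\,\mathrm dy$ and computing $\int_\Omega\overline{\phi_{\bm l_0}(y)}\,\mathrm dy=a^2\frac{\sin\lambda\pi}{\lambda\pi}$, one obtains
\[
\widehat g_{\bm 0}=\frac{\lambda\pi}{a^2\sin\lambda\pi}\Bigl(\tfrac{1}{\gamma_{\bm l_0}}\mathbf E[u^\infty(\widehat x_{\bm 0};k_{\bm 0})]-\sum_{\bm l\neq\bm 0}\widehat g_{\bm l}\int_\Omega\phi_{\bm l}(y)\overline{\phi_{\bm l_0}(y)}\,\mathrm dy\Bigr),
\]
an exact identity once the $\widehat g_{\bm l}$, $\bm l\neq\bm 0$, have been computed from the other measurements. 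So the paper's route couples the zero mode to the full data set and avoids any $\lambda\to 0$ approximation error; your route is simpler but delivers only $\widehat g_{\bm 0}+O(\lambda)$.
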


\begin{proof}
From \eqref{eq: Efar} and \eqref{eq: admissible_mean}, we know that for ${\bm l}\ne{\bm 0},$
\begin{align*}
  \mathbf{E}\left[u^\infty(\widehat{x}_{\bm l};k_{\bm l})\right] & =\gamma_{\bm l}\int_\Omega g(y)\mathrm{e}^{-\mathrm{i}k_{\bm l}\widehat{x}_{\bm l}\cdot y}\mathrm{d}y\\
  &=\gamma_{\bm l}\int_\Omega g(y)\mathrm{e}^{-\mathrm{i}\left(\frac{2\pi}{a}|\bm l|\right)
 \frac{\bm l}{|\bm l|}\cdot y}\mathrm{d}y\\
 &=\gamma_{\bm l}\int_\Omega g(y)\mathrm{e}^{-\mathrm{i}\frac{2\pi}{a}{\bm l}\cdot y}\mathrm{d}y\\
 &=\gamma_{\bm l}\int_\Omega g(y)\overline{\phi_{\bm l}(y)}\mathrm{d}y\\
 &=a^2\gamma_{\bm l}\widehat{g}_{\bm l},
\end{align*}
where and in what follows $\gamma_{\bm l}=\mathrm{e}^{\mathrm{i}\pi/4}/\sqrt{8\pi k_{\bm l}}$. The above deduction leads to the computational formula for the Fourier coefficients $\widehat{g}_{\bm l}$, i.e.,
\begin{align}\label{eq: gl}
\widehat{g}_{\bm l}=\frac{1}{a^2\gamma_{\bm l}}\mathbf{E}\left[u^\infty(\widehat{x}_{\bm l};k_{\bm l})\right],\quad {\bm l}\ne{\bm 0}.
\end{align}

For  ${\bm l}={\bm 0},$ we find that 
\begin{align*}
  \mathbf{E}\left[u^\infty(\widehat{x}_{\bm0};k_{\bm0})\right]&=\gamma_{{\bm l}_0}
  \int_\Omega g(y)\overline{\phi_{{\bm l}_0}(y)}\mathrm{d}y\\
  &=\gamma_{{\bm l}_0}\int_\Omega \left(\widehat{g}_{\bm0}+\sum_{\bm{l}\in\mathbb{Z}^2\backslash\{\bm 0\}}\widehat{g}_{\boldsymbol l}\phi_{\boldsymbol l}(y)\right)\overline{\phi_{{\bm l}_0}(y)}\mathrm{d}y\\
  &=\gamma_{{\bm l}_0}a^2\frac{\sin\lambda\pi}{\lambda\pi}\widehat{g}_{\bm0}
  +\gamma_{{\bm l}_0}\sum_{\bm{l}\in\mathbb{Z}^2\backslash\{\bm 0\}}\widehat{g}_{\boldsymbol l}
  \int_\Omega\phi_{\boldsymbol l}(y)\overline{\phi_{{\bm l}_0}(y)}\mathrm{d}y,
\end{align*}
which further gives 
\begin{align}\label{eq: g0}
\widehat{g}_{\bm 0}=\frac{\lambda\pi}{a^2\sin\lambda\pi}\left(
\frac{1}{\gamma_{{\bm l}_0}}\mathbf{E}\left[u^\infty(\widehat{x}_{\bm0};k_{\bm0})\right]-\sum_{{\bm l}\in\mathbb{Z}^2\backslash\{\boldsymbol{0}\}}\widehat{g}_{\bm l}\int_\Omega\phi_{\bm l}(y)\overline{\phi_{{\bm l}_0}(y)}\mathrm{d}y
\right).
\end{align}
 \end{proof}
 
 Based on the above theorem, we approximate the deterministic source $g$ by the following truncated Fourier series
 \begin{align}\label{eq: gN}
 g_N(x)=\widehat{g}_0+\sum_{1\le|\bm l|_\infty\le N}\widehat{g}_{\bm l}\phi_{\bm l}(x).
  \end{align}
 
 \subsection{Reconstruct the variance function $\sigma^2$}
 In this subsection, we are concerned with the inverse problem to reconstruct the variance function $\sigma^2(x)$ from the measurement $\mathbf{C}\left[u^\infty(\widehat{x};k_0+\tau),u^\infty(\widehat{x}; k_0)\right]$. With a slight difference from the admissible wavenumber to reconstruct the mean function, the admissible wavenumber for the variance reconstruction should be adjusted correspondingly as the following:
 \begin{definition}[Admissible wavenumber to reconstruct the variance function]
 \label{def: admissible_variance}
Take $k_0>0$ and select the admissible wavenumber as follows:
\begin{align}\label{eq: admissible_variance}
\tau_{\bm l}:=\frac{2\pi}{a}|\bm l|,\quad {\bm l}\in\mathbb{Z}^2.
\end{align}
Correspondingly, the observation is defined by 
\begin{align}\label{eq: xhat_tau}
\widehat{x}_{\bm l}:=
\left\{
\begin{aligned}
&\frac{\bm l}{|\bm l|},&&{\bm l}\ne{\bm0},\\
&{\bm a},&&{\bm l}={\bm0},
\end{aligned}
\right.
\end{align}
with ${\bm  a}=(a_1, a_2)\in\mathbb{S}$ being some unit vector chosen flexibly.
 \end{definition}
 
 Based on Definition \ref{def: admissible_variance}, we can prove that 
 \begin{theorem}
   For $\bm{l}\in\mathbb{Z}^2,$ the Fourier coefficients $\{\widehat{\sigma}_{\bm l}\}$ of $\sigma^2$  in \eqref{eq: sigma} can be determined by $\mathbf{C}\left[u^\infty(\widehat{x}; k_0+\tau_{\bm{l}}), u^\infty(\widehat{x}; k_0)\right]$, with $k_0>0$ being a small wavenumber.
 \end{theorem}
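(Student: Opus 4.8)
The plan is to mirror the proof of the preceding theorem for the mean function, with the stochastic convolution in \eqref{eq: acoustic_far} replacing the deterministic one and the It\^o isometry playing the role that the ordinary spatial integral played there. Throughout I take the covariance with complex conjugation on the second argument, $\mathbf{C}[u,v]=\mathbf{E}\big[(u-\mathbf{E}[u])\overline{(v-\mathbf{E}[v])}\big]$, which is the natural convention for the complex-valued far-field pattern.

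First I would form the covariance of the two representations \eqref{eq: acoustic_far} at the wavenumbers $k_0+\tau$ and $k_0$. The deterministic term $\int_\Omega G_k^\infty(\widehat x,y)g(y)\,\mathrm{d}y$ is non-random, hence cancels inside each centered factor, while the mixed deterministic--stochastic contributions drop out because $\mathbf{E}\big[\int_\Omega G_k^\infty(\widehat x,y)\sigma(y)\,\mathrm{d}W_y\big]=0$. What survives is the second moment of the two Wiener integrals; since $\dot W_x$ is an incoherent (delta-correlated) white noise and $\sigma$ is real with support in $\Omega$, the It\^o isometry, extended by linearity to the complex kernels, would give
\[
\mathbf{C}\big[u^\infty(\widehat x;k_0+\tau),u^\infty(\widehat x;k_0)\big]
=\int_\Omega G_{k_0+\tau}^\infty(\widehat x,y)\,\overline{G_{k_0}^\infty(\widehat x,y)}\,\sigma^2(y)\,\mathrm{d}y .
\]
Next I would insert $G_k^\infty(\widehat x,y)=\mathrm{e}^{\mathrm{i}\pi/4}(8\pi k)^{-1/2}\mathrm{e}^{-\mathrm{i}k\widehat x\cdot y}$ and observe that the two phases combine to $\mathrm{e}^{-\mathrm{i}(k_0+\tau)\widehat x\cdot y}\mathrm{e}^{\mathrm{i}k_0\widehat x\cdot y}=\mathrm{e}^{-\mathrm{i}\tau\widehat x\cdot y}$, so that only the amplitude retains $k_0$ and
\[
\mathbf{C}\big[u^\infty(\widehat x;k_0+\tau),u^\infty(\widehat x;k_0)\big]
=\frac{1}{8\pi\sqrt{(k_0+\tau)k_0}}\int_\Omega \mathrm{e}^{-\mathrm{i}\tau\widehat x\cdot y}\,\sigma^2(y)\,\mathrm{d}y .
\]

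Then I would specialize to \Cref{def: admissible_variance}. For $\bm l\neq\bm 0$ one has $\tau_{\bm l}\widehat x_{\bm l}\cdot y=\tfrac{2\pi}{a}|\bm l|\,\tfrac{\bm l}{|\bm l|}\cdot y=\tfrac{2\pi}{a}\bm l\cdot y$, hence $\mathrm{e}^{-\mathrm{i}\tau_{\bm l}\widehat x_{\bm l}\cdot y}=\overline{\phi_{\bm l}(y)}$, and by the definitions \eqref{eq: hatv} and \eqref{eq: sigma} the integral equals $a^2\widehat\sigma_{\bm l}$; solving for the coefficient yields the explicit formula
\[
\widehat\sigma_{\bm l}=\frac{8\pi\sqrt{(k_0+\tau_{\bm l})k_0}}{a^2}\,
\mathbf{C}\big[u^\infty(\widehat x_{\bm l};k_0+\tau_{\bm l}),u^\infty(\widehat x_{\bm l};k_0)\big],\qquad\bm l\neq\bm 0 .
\]
For $\bm l=\bm 0$ we have $\tau_{\bm 0}=0$, so $k_0+\tau_{\bm 0}=k_0>0$ is still admissible and $\mathrm{e}^{-\mathrm{i}\tau_{\bm 0}\widehat x_{\bm 0}\cdot y}\equiv1$ for every unit vector $\bm a$; hence $\mathbf{C}[u^\infty(\bm a;k_0),u^\infty(\bm a;k_0)]=(8\pi k_0)^{-1}\int_\Omega\sigma^2(y)\,\mathrm{d}y=a^2(8\pi k_0)^{-1}\widehat\sigma_{\bm 0}$ and $\widehat\sigma_{\bm 0}$ is read off directly — unlike in the mean case, no regularization constant is needed, since the zeroth admissible wavenumber $k_0$ is already positive.

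The step I expect to be the main obstacle is the second-moment identity above: one has to make rigorous sense of the Wiener integral $\int_\Omega G_k^\infty(\widehat x,y)\sigma(y)\,\mathrm{d}W_y$ for a complex-valued, smooth, $k$-dependent kernel, verify that the deterministic--stochastic cross terms genuinely vanish, and justify the complex It\^o isometry $\mathbf{E}\big[(\int\phi\,\mathrm{d}W)\overline{(\int\psi\,\mathrm{d}W)}\big]=\int\phi\overline\psi\,\mathrm{d}y$ — this is precisely where the incoherence of the driving noise (which collapses the mixed second moment to a single spatial integral against $\sigma^2$) and the compact support in $\Omega$ are used. After that, the argument is the same bookkeeping as for $g$, the only structural novelty being that a product of two far-field kernels, not a single one, must telescope to the conjugated Fourier mode $\overline{\phi_{\bm l}}$; it is exactly this requirement that forces the admissible choice $\tau_{\bm l}=\tfrac{2\pi}{a}|\bm l|$ paired with the matched observation direction $\widehat x_{\bm l}=\bm l/|\bm l|$.
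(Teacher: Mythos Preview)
Your proposal is correct and follows essentially the same route as the paper's proof. The only cosmetic difference is that the paper first rescales $u^\infty$ to $U(\widehat x;k)=\sqrt{8\pi k}\,\mathrm{e}^{-\mathrm{i}\pi/4}u^\infty(\widehat x;k)$ so as to strip off the amplitude constant before taking the covariance, whereas you carry the factor $\bigl(8\pi\sqrt{(k_0+\tau_{\bm l})k_0}\bigr)^{-1}$ through explicitly; the delta-correlation identity the paper invokes is exactly the complex It\^o isometry you use, and both arguments arrive at the same formula and the same observation that the zeroth coefficient needs no auxiliary small parameter.
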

 
 \begin{proof}
 We first recall the crucial property that 
 \begin{align}\label{eq: property}
 \mathbf{E}\left[\sigma(y)\dot{W}_y\overline{\sigma(z)\dot{W}_z}\right]=\sigma(y)\sigma(z)\delta(y-z),
 \end{align}
 where 
$$
 \delta(x)=
 \begin{cases}
 1, & x=0,\\
 0, & x\ne0.
 \end{cases}
$$

For notational convenience, we denote 
$$
   U(\widehat{x};k)=\frac{\sqrt{8\pi k}}{\mathrm{e}^{\mathrm{i}\pi/4}}u^\infty(\widehat{x}; k).
$$
From \eqref{eq: acoustic_far} and \eqref{eq: Efar}, it holds that 
\begin{align*}
  U(\widehat{x}; k)-\mathbf{E}\left[U(\widehat{x};k)\right]=\int_\Omega\sigma(y)\dot{W}_y\mathrm{e}^{-\mathrm{i}k\widehat{x}\cdot y}\mathrm{d}y.
\end{align*}
Furthermore, from the relation between the covariance and mean, we derive that for small wavenumber $k_0>0$ and ${\bm l}\in\mathbb{Z}^2$
\begin{align*}
  &\quad\frac{1}{a^2}\mathbf{C}\left[U(\widehat{x}_{\bm l};k_0+\tau_{\bm l}),U(\widehat{x}_{\bm l};k_0)\right]\\
  &=\frac{1}{a^2}\mathbf{E}\left[\left(U(\widehat{x}_{\bm l};k_0+\tau_{\bm l})-\mathbf{E}\left[U(\widehat{x}_{\bm l};k_0+\tau_{\bm l})\right]\right)
  \overline{\left(U(\widehat{x}_{\bm l};k_0)-\mathbf{E}\left[U(\widehat{x}_{\bm l};k_0)\right]\right)}\right]\\
  &=\frac{1}{a^2} \int_\Omega\int_\Omega\mathrm{e}^{-\mathrm{i}(k_0+\tau_{\bm l})\widehat{x}_{\bm l}\cdot y}
  \mathrm{e}^{\mathrm{i}k_0\widehat{x}_{\bm l}\cdot z}\mathbf{E}\left[\sigma(y)\dot{W}_y\overline{\sigma(z)\dot{W}_z}\right]\mathrm{d}y\mathrm{d}z.
\end{align*}
This together with \eqref{eq: property} leads to 
\begin{equation}\label{eq: sigmal}
  \frac{1}{a^2}\mathbf{C}\left[U(\widehat{x}_{\bm l}; k_0+\tau_{\bm l}), U(\widehat{x}_{\bm l}; k_0)\right]
  =\frac{1}{a^2}\int_\Omega\sigma^2(y)\mathrm{e}^{-\mathrm{i}\tau_{\bm l}\widehat{x}_{\bm l}\cdot y}\mathrm{d}y
 =\widehat{\sigma}_{\bm l}.
\end{equation}
 
Especially, for ${\bm l}={\bm0},$ the Fourier coefficient is defined by
$$
    \widehat{\sigma}_{\bm 0}=\frac{1}{a^2}\mathbf{C}[U(\widehat{x}_0; k_0),U(\widehat{x}_0; k_0)]
                                  =\frac{1}{a^2}\int_\Omega\sigma^2(y)\mathrm{d}y,
$$
 which is independent of the observation direction $\widehat{x}_0$ essentially. Thus, the covariance $\mathbf{C}[U(\widehat{x}_0; k_0), U(\widehat{x}_0; k_0)]$ can be computed by selecting some observation direction $\widehat{x}_0=\bm{a}\in\mathbb{S}$ and it is not necessary to relate $\widehat{x}_0$ with the Fourier index ${\bf 0}=(0,0).$
 \end{proof}
 
 The above theorem showcases an approximation to the variance function $\sigma^2,$ which can be defined via
 \begin{align}\label{eq: sigmaN}
 \sigma_N^2(x)=\sum_{|\bm{l}|_\infty\le N}\widehat{\sigma}_{\boldsymbol l}\phi_{\boldsymbol l}(x).
 \end{align}

At the end of this subsection, we would like to compare the reconstruction schemes developed for the mean and the variance function. A main difference is that when ${\bm l}={\bm 0},$ we should select a small wavenumber and utilize the corresponding radiated field to compute the corresponding Fourier coefficient. Conversely, this step is unnecessary for reconstructing the variance function $\sigma^2(x).$ This improvement enables us to achieve a better reconstruction, as illustrated in the next subsection.

 \subsection{Numerical simulation}

This section aims to verify the acoustic-version Fourier method for determining the stochastic source. The radiated data is obtained by the numerical solution of the stochastic Helmholtz equation to avoid the inverse crime. Specifically, for each realization, we compute the far-field of the radiated field by the direct integration, whose explicit computational formula is given by \eqref{eq: acoustic_far}, and the integration domain is chosen to be $\Omega=[-0.5,0.5]\times[-0.5,0.5]$. After the realizations are done, we take the average of the solutions to approximate the corresponding scattering data, i.e., the mean or the covariance. The total number of realizations is $10^6.$ In general, more realizations would result in enhanced accuracy in the data.
 
To test the stability of the proposed method, we add some noise to the far field data, i.e.,
 $$
 u^{\infty,\delta}:=u^\infty+\delta r_1|u^\infty|\mathrm{e}^{\mathrm{i}\pi r_2},
 $$
 with $r_\ell,\ell=1,2,$ being two uniformly distributed random numbers that range from $-1$ from 1, $\delta\in[0,1)$ is the noise level. 
 
 In this section, the truncation $N$ in \eqref{eq: gN} and \eqref{eq: sigmaN}  is chosen to be
 $$
 N = 2\left[\delta^{-1/2}\right],
 $$
 here, $[X]$ denotes the largest integer that is smaller than $X+1.$
  Then, the admissible wavenumbers defined by \Cref{def: admissible_mean} and \Cref{def: admissible_variance} are respectively given by
   \begin{align*}
   k_{\boldsymbol l}:=\left\{
   \begin{aligned}
   &{2\pi}|\bm{l}|,&&0<|\bm{l}|_\infty\le N\\
   &{2\pi}\times 10^{-3},&&\bm{l}=\boldsymbol{0}
   \end{aligned}
   \right.,\quad 
\tau_{\bm l}:=\frac{2\pi}{a}|\bm l|,\quad |\bm{l}|_\infty\le N.
   \end{align*}
   
   Once the admissible wavenumbers  are determined, the synthesis statistical measurements (mean and variance) can be given by
   \begin{align*}
     \left\{
     \mathbf{E}\left[u^{\infty,\delta}(\widehat{x}_{\bm l};k_{\bm l})\right],\ \
     \mathbf{C}\left[u^{\infty,\delta}(\widehat{x}_{\bm l};k_0+\tau_{\bm l}),u^{\infty,\delta}(\widehat{x}_{\bm l};k_0)\right]
     \right\},
   \end{align*}
 and $k_0>0$ is the small wavenumber that will be specified later.
 
 Given the measured data $\left\{\mathbf{E}\left[u^{\infty,\delta}(\widehat{x}_{\bm l};k_{\bm l})\right]; \mathbf{C}\left[u^{\infty,\delta}(\widehat{x}_{\bm l};k_0+\tau_{\bm l}),u^{\infty,\delta}(\widehat{x}_{\bm l};k_0)\right]\right\}$, we compute the Fourier coefficients $\widehat{g}_{\bm l}^\delta,1\le|\bm l|_\infty\le N$ and $\widehat{\sigma}_{\bm l}^\delta$ via \eqref{eq: gl} and \eqref{eq: sigmal}, respectively. For $\bm l=\bm 0,$ $\widehat{g}^\delta_{\bm0}$ is given by \eqref{eq: g0}.
 
 For quantitatively characterizing the reconstruction, we introduce a $401\times401$ grid of uniformly spaced points $x_m\in V_0,m=1,\cdots,401^2$, and compute the relative $L^2$  error as
\begin{align*}
  &\frac{\|w^\delta_N-w\|_0}{\|w\|_0}=\frac{\left(\sum_{m=1}^{401^2}\left|w_N^\delta(x_m)-w(x_m)\right|^2\right)^{1/2}}{\left(\sum_{m=1}^{401^2}\left|w(x_m)\right|^2\right)^{1/2}}\\
  &\frac{\|w^\delta_N-w\|_1}{\|w\|_1}=\frac{\left(\sum_{m=1}^{401^2}\left(\left|\nabla w_N^\delta(x_m)-\nabla w(x_m)\right|^2+\left|w_N^\delta(x_m)-w(x_m)\right|^2\right)\right)^{1/2}}
    {\left(\sum_{m=1}^{401^2}\left|\nabla w(x_m)\right|^2+\left|w(x_m)\right|^2\right)^{1/2}}
\end{align*}
with $w$ being $g$ or $\sigma^2,$ depending on the statistical information to be determined, and $w_N^\delta$ is the reconstruction utilizing the noisy measurements.
 
Based on the above preparation, we are now to deliver a numerical example in which the mean function $g$ is given by
$$
   g(x_1, x_2)=\mathrm{e}^{-200((x-0.01)^2+(y-0.12)^2)}-100(y^2-x^2)\mathrm{e}^{-90(x^2+y^2)},
$$
and the standard derivation function $\sigma$ is given by
$$
    \sigma(x_1, x_2)=\frac{1}{2}g(x_1, x_2).
$$

We refer to \Cref{fig: S1} for the surface plots of the exact mean function $g$ and the variance function $\sigma^2$ in $\Omega$.

\begin{figure}
   \centering
   \subfigure{\includegraphics[width=0.45\linewidth]{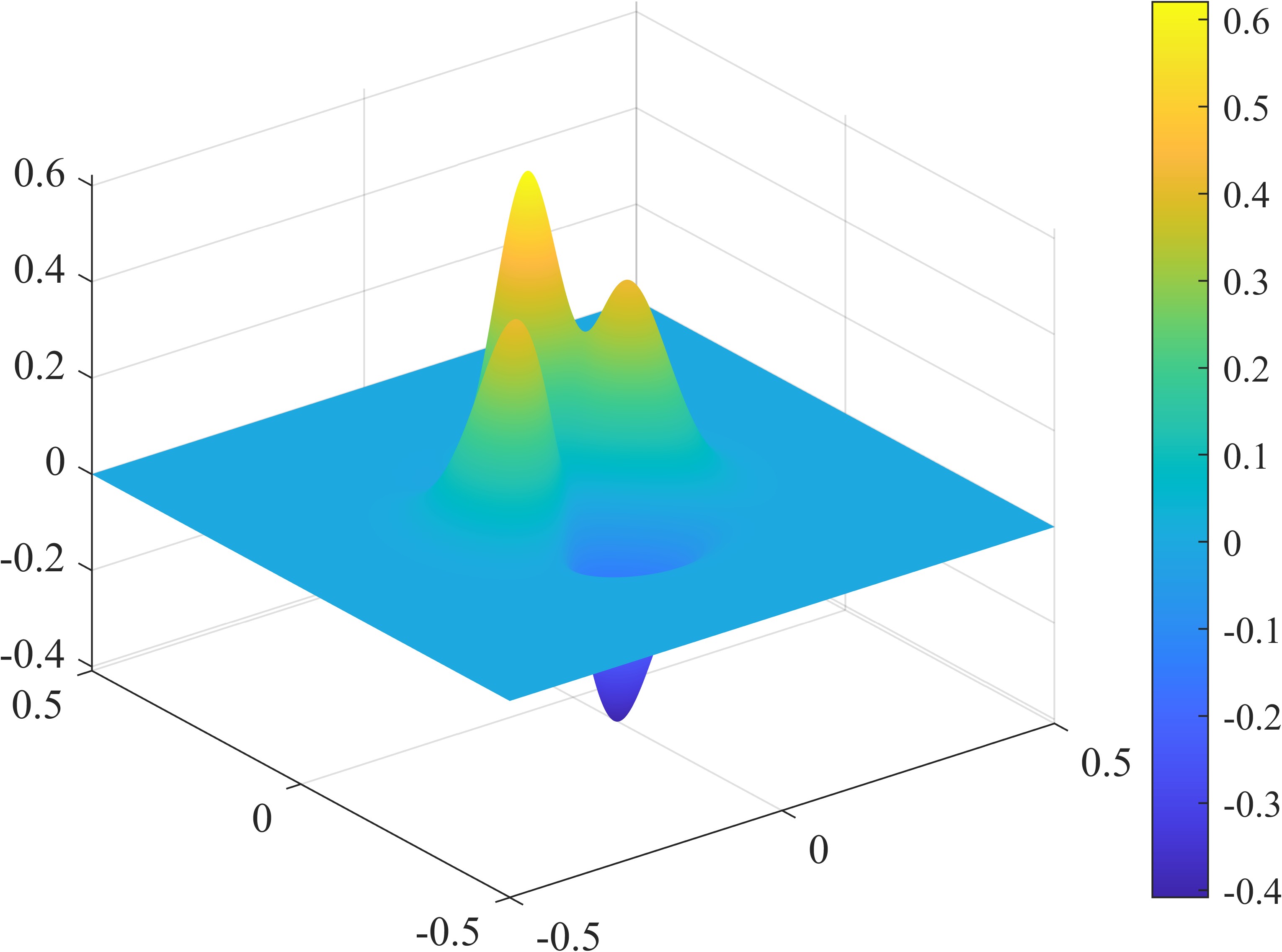}}\qquad
   \subfigure{\includegraphics[width=0.45\linewidth]{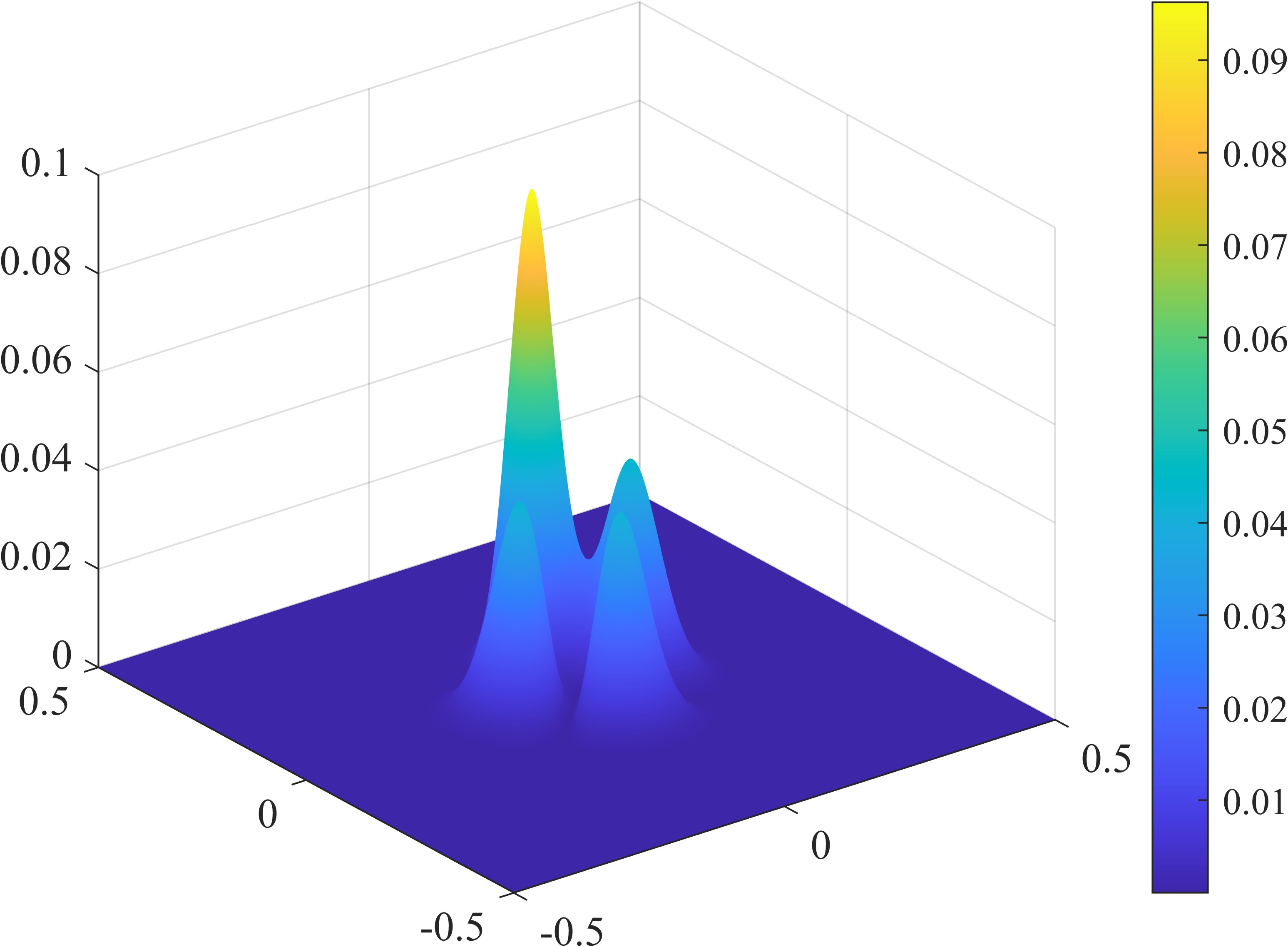}}
  \caption{The exact source function. (a) mean function $g$ (b) the variance function $\sigma^2.$}\label{fig: S1}
\end{figure}

Taking $k_0=1, \, \delta=5\%,$ we display the reconstructed mean $g_N$ and variance $\sigma_N^2$ in \Cref{fig: S1N}. Comparing \Cref{fig: S1} and \Cref{fig: S1N}, we can observe that both the acoustic mean function $g$ and the variance function $\sigma^2$ are well-reconstructed.
For quantitative comparison, we further list in \Cref{tab: error} the relative errors of the reconstruction for different noise levels. 
We find from \Cref{tab: error} that our method can realize a quantitative reconstruction no matter the mean function or the covariance function, which further illustrates that our method is effective in reconstructing the mean function and the covariance function. For an intuitive visualization, we refer to  \Cref{fig: S1N10} for the reconstruction of the mean and the covariance. Comparing \Cref{fig: S1N}, \Cref{fig: S1N10} and \Cref{tab: error}, we find that though the error for $\delta=10\%$ may be relatively large compared with that for $\delta=5\%,$ the reconstruction is still satisfactory and this further illustrates the capability of robust reconstruction by the proposed method.
 
\begin{figure}
  \centering
   \subfigure{\includegraphics[width=0.45\linewidth]{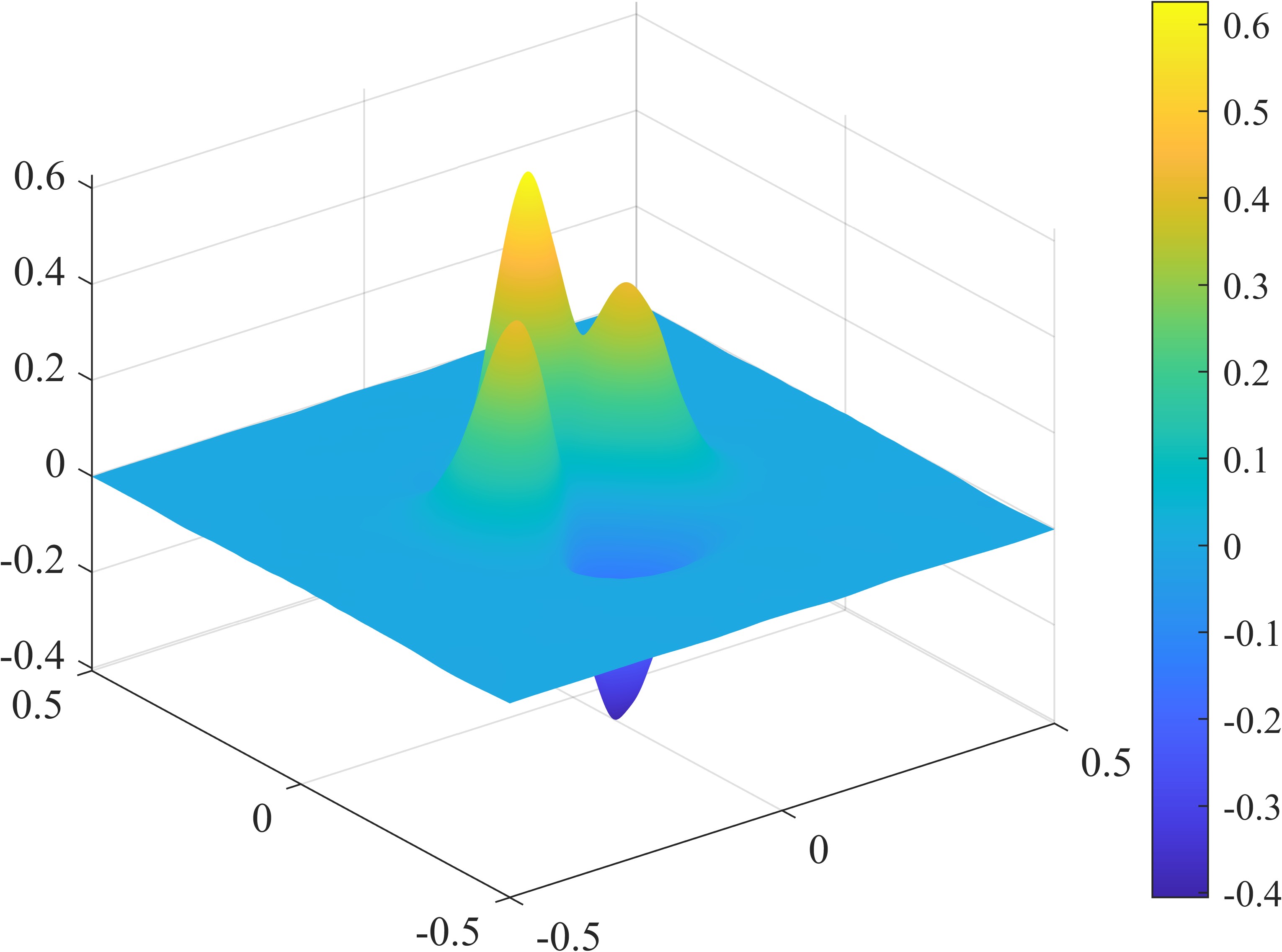}}\qquad
   \subfigure{\includegraphics[width=0.45\linewidth]{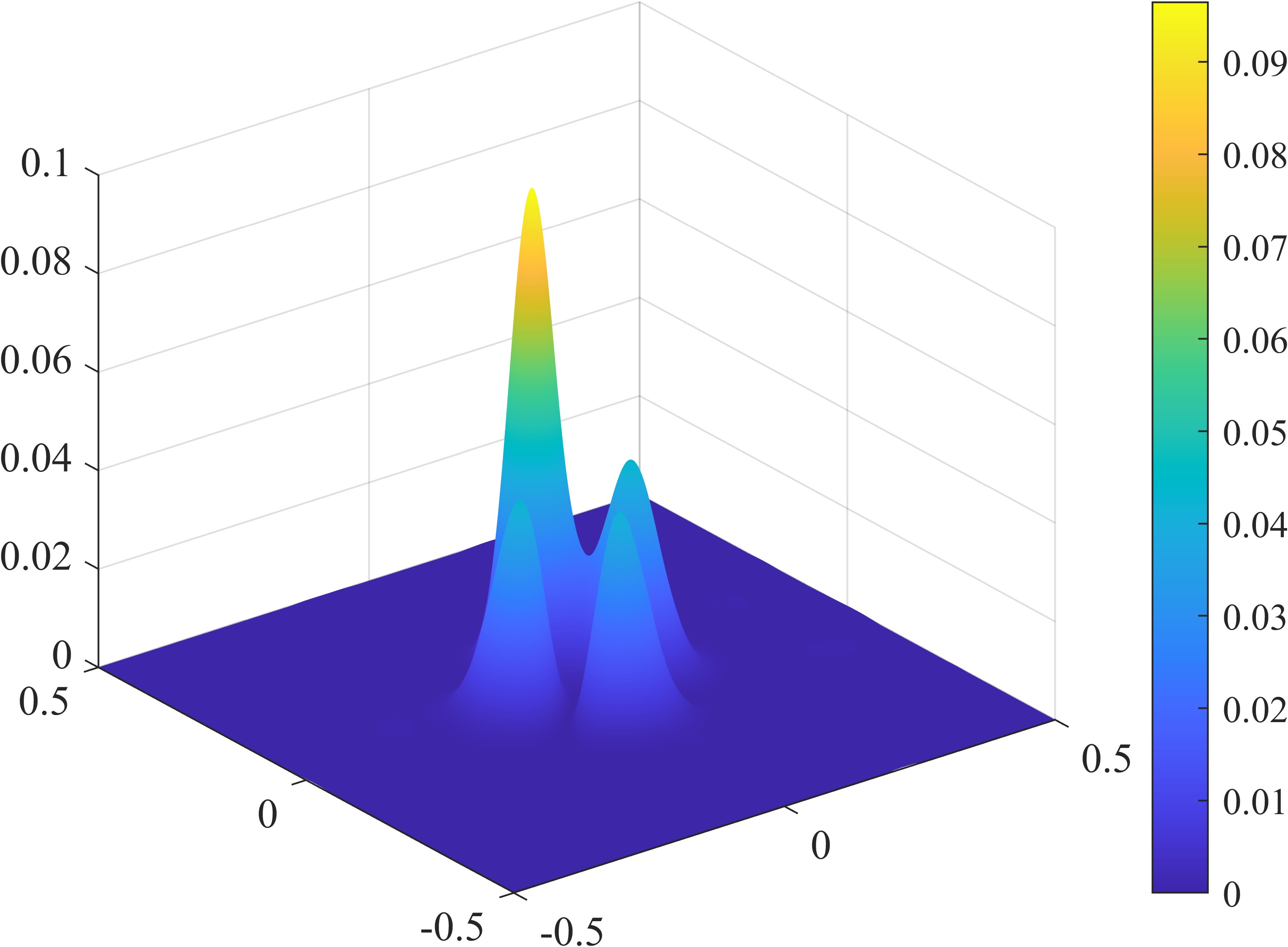}}
  \caption{The reconstructed source function for $\delta = 5\%$. (a) mean function $g_N$ (b) the variance function $\sigma_N^2.$}\label{fig: S1N}
\end{figure}

\begin{table}
  \centering
  \begin{tabular}{cccccc}
    \toprule
    $\delta$ & $0.5\%$ & $1\%$ & $5\%$ &$10\%$ \\
    \midrule
    \vspace{0.2cm}
    ${\|g^\delta_N-g\|_0}/{\|g\|_0}$&$1.84\%$&$1.92\%$&$3.57\%$&$6.38\%$\\
    \vspace{0.2cm}
    ${\|g^\delta_N-g\|_1}/{\|g\|_1}$&$3.55\%$&$3.59\%$&$4.68\%$&$7.06\%$\\
    \vspace{0.2cm}
    ${\|\sigma^{2,\delta}_N-\sigma^2\|_0}/{\|\sigma^2\|_0}$&$0.97\%$&$1.09\%$&$3.09\%$&$5.97\%$\\
    ${\|\sigma^{2,\delta}_N-\sigma^2\|_1}/{\|\sigma^2\|_1}$ & $1.71\%$ & $1.78\%$ & $3.37\%$ & $6.09\%$ \\
    \bottomrule
  \end{tabular}
  \caption{The relative errors of the reconstruction of $g_N$ and $\sigma_N^2$ for different noise levels.}\label{tab: error}
\end{table}

\begin{figure}
    \centering
    \subfigure{\includegraphics[width=0.4\linewidth]{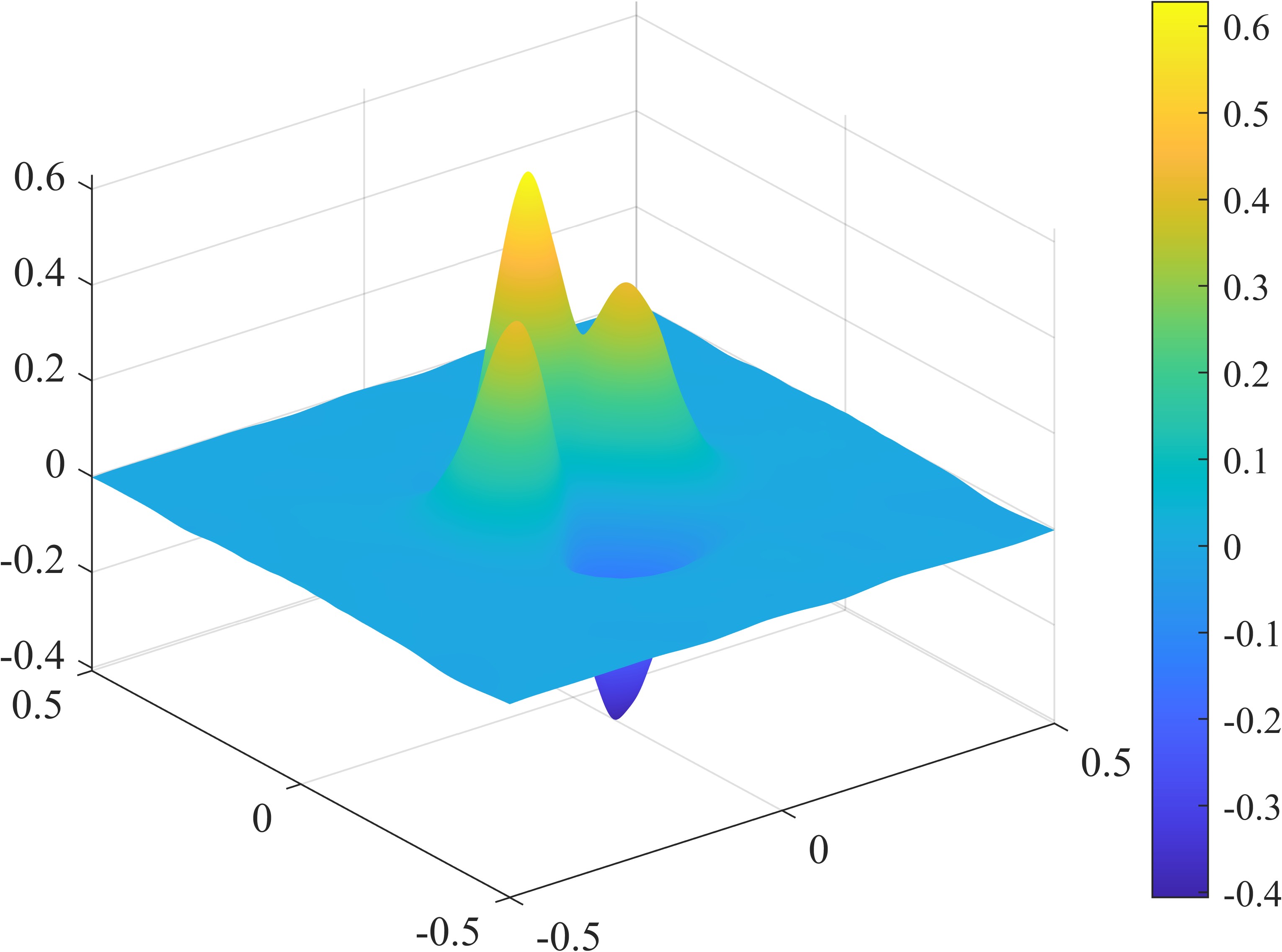}}\qquad
    \subfigure{\includegraphics[width=0.4\linewidth]{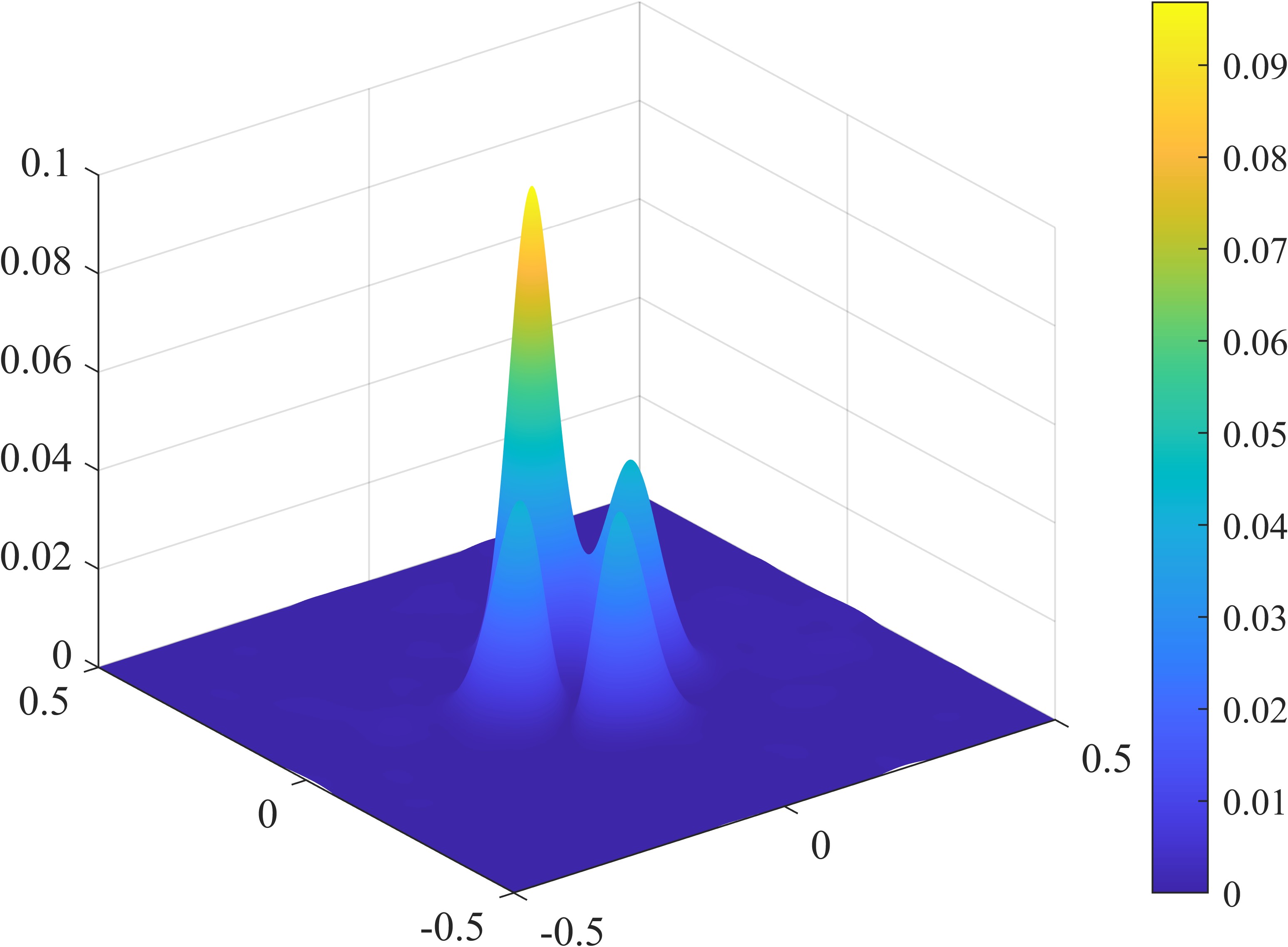}}
    \caption{The reconstructed source function for $\delta=10\%$. (a) the mean $g_N$ (b) the variance $\sigma_N^2.$}\label{fig: S1N10}
\end{figure}


\section{Elastic waves}
 
In this section, we are concerned with the inverse random source problems for elastic waves in an isotropic homogeneous medium. We shall start this section with a brief mathematical description of the multifrequency inverse elastic stochastic source problem under consideration and then develop the corresponding reconstruction scheme based on the Fourier expansion. Several numerical experiments will be presented to demonstrate the feasibility of the proposed method.
 \subsection{Problem formulation}
 The displacement of elastic wave $\bm{u}\in\mathbb{C}^2(\mathbb{R}^2)$ is modelled by the following stochastic Navier equation:
 \begin{align}\label{eq: Navier}
 \mu\Delta \bm{u}+(\lambda+\mu)\nabla\nabla\cdot\bm{u}+\omega^2\bm{u}=-\boldsymbol{F} \quad \text{in}\ \mathbb{R}^2,
 \end{align}
 where $\omega>0$ is the angular frequency, $\lambda$ and $\mu$ are the Lam\'{e} constants satisfying $\mu>0,\lambda+\mu>0.$ The source $\boldsymbol{F}=(F_1, F_2)^\top$ is assumed to be a random function driven by an additive white noise, which takes the form
 \begin{align}\label{eq: F}
 \boldsymbol{F}=\boldsymbol{g}+\boldsymbol{\sigma}\dot{W}_x,
 \end{align}
 where $x=(x_1,x_2)^\top\in\mathbb{R}^2,$ $\boldsymbol{g}=(g_1,g_2)^\top$ is a deterministic real-valued vector function, the diagonal matrix function $\boldsymbol{\sigma}=\text{diag}(\sigma_1,\sigma_2)$ is deterministic, with whose element $\sigma_\ell\ge0,\,\ell=1,2.$ Throughout this paper, we assume that the functions $g_\ell,\sigma_\ell,\,\ell=1,2$ are independent of angular frequency $\omega,$ and have compact supports that are contained in the squared domain $\Omega\subset\mathbb{R}^2.$ 
 In \eqref{eq: f}, $W(x)=(W_1(x), W_2(x))^\top$ is a two-dimensional Brownian sheet of two parameters with $W_\ell(x),\ell=1,2$ being independent one-dimensional two-parameter Brownian sheets in the probability space. Further, the white noise $\dot{W}_x$ is the derivative of the Brownian sheet $W(x).$ 
Within the random source model, the functions $\boldsymbol{g}$ and $\boldsymbol{\sigma}$ can be viewed as the mean and the standard derivation of $\boldsymbol{F},$ respectively. Correspondingly, $\boldsymbol{\sigma}^2=\text{diag}(\sigma_1^2,\sigma_2^2)$  is the variance of $\boldsymbol{F}.$
 
For any solution $\bm{u}$ of \eqref{eq: Navier}, the Helmholtz decomposition reads 
$$
  \bm{u}=\bm{u}_p+\bm{u}_s,
$$
with 
\begin{align*}
\bm{u}_p=-\frac{1}{k_p^2}\nabla\nabla\cdot\bm{u},\quad
\bm{u}_s=\frac{1}{k_s^2}\textbf{curl}\text{curl}\bm{u},
\end{align*}
being the compressional and shear parts of the displacement $\bm{u},$ respectively. Here, $k_p$ and $k_s$ are the compressional wavenumber and the shear wavenumber, respectively, which are given by
$$
  k_p=\frac{\omega}{c_p},\quad k_s=\frac{\omega}{c_s}.
$$
with 
$
c_p=\sqrt{\lambda+2\mu},\ \ c_s=\sqrt{\mu}.
$
In addition, $\bm{u}_p$ and $\bm{u}_s$ are supposed to satisfy the following Sommerfeld radiation condition
\begin{align}\label{eq: Sommerfeldelastic}
\lim\limits_{r=|x|\to\infty}\sqrt{r}\left(\frac{\partial \bm{u}_\xi}{\partial r}-\mathrm{i}k_\xi\bm{u}_\xi\right)=0,\quad \xi\in\{p,s\},
\end{align}
uniformly in all directions $\widehat{x}=x/|x|$.

Denote by $\mathbb{G}(x,y;\omega)\in\mathbb{C}^{2\times2}$ the Green tensor to the Navier equation, which takes the form 
\begin{align*}
  \mathbb{G}(x, y; \omega) =\frac{1}{\mu}G_{k_s}(x,y)\mathbb{I}+\frac{1}{\omega^2}\nabla_x\nabla_x^\top
  \left(G_{k_s}(x,y)-G_{k_p}(x,y)
  \right),
\end{align*}
where $\mathbb{I}\in\mathbb{R}^{2\times 2}$ is the identity matrix and $G_{k_\xi}(x,y)$ is the fundamental solution to the Helmholtz equation of form \eqref{eq: Phi} with $k=k_\xi,\,\xi\in\{p,s\}.$ Then the unique solution to \eqref{eq: Navier} and \eqref{eq: Sommerfeldelastic} is given by 
\begin{align}\label{eq: solution_elastic}
\bm{u}(x;\omega)=\int_\Omega\mathbb{G}(x,y;\omega) \bm{F}(y) \mathrm{d}y.
\end{align}

It has been shown in \cite{Arens}  that the radiating solution $\bm{u}$ to the Navier equation admits the following asymptotic behavior 
\begin{align}\label{eq: asymptotic}
\bm{u}(x;\omega)=\frac{\mathrm{e}^{\mathrm{i}k_p|x|}}{\sqrt{|x|}}\bm{u}_p^\infty(\widehat{x};\omega)
+\frac{\mathrm{e}^{\mathrm{i}k_s|x|}}{\sqrt{|x|}}\bm{u}_s^\infty(\widehat{x};\omega)+\mathcal{O}\left(|x|^{-3/2}\right),\quad |x|\to\infty,
\end{align}
which holds uniformly in all directions $\widehat{x}=x/|x|.$ In \eqref{eq: asymptotic}, $\bm{u}_p^\infty(\widehat{x};\omega)$ and $\bm{u}_s^\infty(\widehat{x};\omega)$ are known as the compressional and shear far-field patterns of the scattered field $\bm{u}$, respectively, and 
\begin{align}\label{eq: up_far}
&\bm{u}_p^\infty(\widehat{x};\omega)=\gamma_{p}\frac{k_p^2}{\omega^2}\int_\Omega\widehat{x}\widehat{x}^\top\mathrm{e}^{-\mathrm{i}k_p\widehat{x}\cdot y}\boldsymbol{F}(y)\mathrm{d}y,\\\label{eq: us_far}
&\bm{u}_s^\infty(\widehat{x};\omega)=\gamma_{s}\frac{k_s^2}{\omega^2}\int_\Omega\left(\mathbb{I}-\widehat{x}\widehat{x}^\top\right)\mathrm{e}^{-\mathrm{i}k_s\widehat{x}\cdot y}\boldsymbol{F}(y)\mathrm{d}y,
\end{align}
with $\gamma_\xi=\frac{\mathrm{e}^{\mathrm{i}\pi/4}}{\sqrt{8\pi k_\xi}},\,\xi\in\{p,s\}.$

Taking the expectation on both side of \eqref{eq: up_far}--\eqref{eq: us_far}  gives
\begin{align}
\mathbf{E}\left[\bm{u}_p^\infty(\widehat{x};\omega)\right]&=\gamma_{p}\frac{k_p^2}{\omega^2}\int_\Omega\widehat{x}\widehat{x}^\top\mathrm{e}^{-\mathrm{i}k_p\widehat{x}\cdot y}\boldsymbol{g}(y)\mathrm{d}y,\label{eq: up_far_E}\\
\mathbf{E}\left[\bm{u}_s^\infty(\widehat{x};\omega)\right]&=\gamma_{s}\frac{k_s^2}{\omega^2}\int_\Omega\left(\mathbb{I}-\widehat{x}\widehat{x}^\top\right)\mathrm{e}^{-\mathrm{i}k_s\widehat{x}\cdot y}\boldsymbol{g}(y)\mathrm{d}y,\label{eq: us_far_E}
\end{align}
where \cite[Proposition A.3]{BCL17} has been taken into account. Further, one can easily deduce that 
\begin{align}\label{eq: up_Eup}
   {\bm u}_p^\infty(\widehat{x};\omega)-\mathbf{E}\left[{\bm u}_p^\infty(\widehat{x};\omega)\right]&=\gamma_{p}\frac{k_p^2}{\omega^2}\int_\Omega\widehat{x}\widehat{x}^\top
   \mathrm{e}^{-\mathrm{i}k_p\widehat{x}\cdot y}\boldsymbol{\sigma}(y)\mathrm{d}W_y,
   \\\label{eq: us_Eus}
   {\bm u}_s^\infty(\widehat{x};\omega)-\mathbf{E}\left[{\bm u}_s^\infty(\widehat{x};\omega)\right]
   &=\gamma_{s}\frac{k_s^2}{\omega^2}\int_\Omega\left(\mathbb{I}-\widehat{x}\widehat{x}^\top\right)\mathrm{e}^{-\mathrm{i}k_s\widehat{x}\cdot y}\boldsymbol{\sigma}(y)\mathrm{d}W_y.
 \end{align}

Similar to the ISP for the acoustic wave, we take some small $\omega_0>0$ and compute the covariance $\mathbf{C}\left[\bm{u}_\alpha^\infty(\widehat{x};\omega_0+\tau),\bm{u}_\beta^\infty(\widehat{x};\omega_0+\tau)\right]$
 for each $\tau>0$ with $(\alpha,\beta)\in\{(p,p),(p,s),(s,p),(s,s)\}$.
 
We are now in a position to formulate the following ISP for the elastic wave:
 \begin{problem}
   Choose two sets of angular frequencies $\{\omega\}$ and $\{\tau\},$ and take some small frequency $\omega_0>0.$ Then the ISP for the elastic wave is to determine the mean $\boldsymbol{g}$ and the variance $\boldsymbol{\sigma}^2$ from the multi-frequency far-field statistical measurements
   \begin{align*}
    \left\{
     \mathbf{E}\left[\bm{u}_\xi^\infty(\widehat{x};c_\xi\omega)\right]: \xi\in\{p,s\}
     \right\},
   \end{align*}
   and 
   \begin{align*} \left\{  \mathbf{C}\left[\bm{u}_\alpha^\infty(\widehat{x};c_\alpha(\omega_0+\tau)),\bm{u}_\beta^\infty(\widehat{x};c_\beta(\omega_0+\tau))\right]
:(\alpha,\beta)\in\{(p,p),(p,s),(s,p),(s,s)\}
\right\}
,
   \end{align*}
   respectively.
 \end{problem}
 
 For convenience, we introduce several notations concerning the vector functions. Again, we assume that the compact supports of $\bm{g}$ and $\bm{\sigma}$ are contained in
 \[
 \Omega=\left(-\frac{a}{2},\frac{a}{2}\right)\times\left(-\frac{a}{2},\frac{a}{2}\right).
 \]
 For any function $\boldsymbol{v}=(v^{(1)},v^{(2)})^\top\in (L^2(\Omega))^2$, the Fourier series expansion reads
 \[
 \boldsymbol{v}(x)=\left(
 \sum_{\bm{l}\in\mathbb{Z}^2}v_{\bm{l}}^{(1)}\phi_{\bm{l}}(x),\,
 \sum_{\bm{l}\in\mathbb{Z}^2}v_{\bm{l}}^{(2)}\phi_{\bm{l}}(x)
 \right)^\top,
 \]
 where $\phi_{\bm{l}}(x)$ are  the Fourier basis functions defined through \eqref{eq: basis}, and the Fourier coefficients are given by 
 \begin{align*}
   v_{\bm{l}}^{(j)}=\frac{1}{a^2}\int_\Omega v^{(j)}(x)\overline{\phi_{\bm{l}}(x)}\mathrm{d}s,
   \quad j=1,2.
 \end{align*}
Denote 
 \[
 \boldsymbol{v}_{\bm{l}}=\left( v_{\bm{l}}^{(1)}, v_{\bm{l}}^{(2)}
 \right)^\top,
 \]
 then it holds that 
 \begin{align}\label{eq: v_expansion}
 \boldsymbol{v}(x)=\sum_{\bm{l}\in\mathbb{Z}^2} \boldsymbol{v}_{\bm{l}}\phi_{\bm{l}}(x),
 \end{align}
 with the Fourier coefficients rewritten by
 \begin{align}\label{eq: Fourier_coe}
 \bm{v}_{\bm{l}}=\frac{1}{a^2}\int_\Omega\boldsymbol{v}(x)\overline{\phi_{\bm{l}}(x)}\mathrm{d}x,
 \quad \bm{l}\in\mathbb{Z}^2.
 \end{align}
 
 Then, the mean function $\boldsymbol{g}$  and the variance function $\boldsymbol{\sigma}^2$ can be represented by the Fourier series 
 \begin{align}\label{eq: elastic_g}
\boldsymbol{g}(x)=\sum_{\bm{l}\in\mathbb{Z}^2}\widehat{\boldsymbol{g}}_{\boldsymbol l}\phi_{\boldsymbol l}(x), \\\label{eq: elastic_sigma}
\boldsymbol{\sigma}^2(x)=\sum_{\bm{l}\in\mathbb{Z}^2}\widehat{\boldsymbol{\sigma}}_{\boldsymbol l}\phi_{\boldsymbol l}(x),
 \end{align}
 where $\widehat{\boldsymbol{g}}_{\boldsymbol l}$ and $\widehat{\boldsymbol{\sigma}}_{\boldsymbol l}$ are the Fourier coefficients. 
 
 In the next two subsections, we shall investigate the numerical method to reconstruct the Fourier coefficients $\widehat{\boldsymbol{g}}_{\boldsymbol l}$ and $\widehat{\boldsymbol{\sigma}}_{\boldsymbol l}$  from the measurements. 
 
 \subsection{Recover the mean function}
This subsection is devoted to recovering the mean function $\boldsymbol{g}$ from $\mathbf{E}$. Analogous to the acoustic case, we first define the admissible angular frequencies.

 \begin{definition}[Admissible angular frequencies to reconstruct the mean function $\boldsymbol{g}$]
Let $\xi$ be a sufficiently small positive constant, then the admissible angular frequencies can be defined by
\begin{align}\label{eq: omega}
\omega_{\bm l}=\left\{
\begin{aligned}
&\frac{2\pi}{a}|\bm l|,&&{\bm l}\in\mathbb{Z}^2\backslash\{\boldsymbol 0\},\\
&\frac{2\pi}{a}\xi,&&{\bm l}=\{\bm 0\}.
\end{aligned}
\right.
\end{align}
The corresponding observation direction is defined by 
\begin{align}\label{eq: xhat_elastic}
\widehat{x}_{\bm l}=\left\{
\begin{aligned}
&\frac{\bm l}{|\bm l|},&&{\bm l}\in\mathbb{Z}^2\backslash\{\boldsymbol{0}\},\\
&(1,0)^\top,&&{\bm l}={\bm 0}.
\end{aligned}
\right.
\end{align}
\end{definition}
 
Under such a definition, we derive the following results:
 \begin{theorem}
   Let the angular frequencies and the observation directions be defined by \eqref{eq: omega} and \eqref{eq: xhat_elastic}, respectively. Then for ${\bm l}\in\mathbb{Z}^2$, the Fourier coefficients $\widehat{\bm g}_{\bm l}$ of the deterministic source term $\bm g$ in \eqref{eq: elastic_g} can be determined by the multi-frequency mean data $\mathbf{E},$ i.e., $\left\{
     \mathbf{E}\left[\bm{u}_\xi^\infty(\widehat{x}_{\bm l};c_\xi\omega_{\bm l})\right]: \xi\in\{p,s\}
     \right\}.$ 
 \end{theorem}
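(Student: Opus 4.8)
The plan is to transcribe the acoustic argument behind \eqref{eq: gl}--\eqref{eq: g0}, with one new ingredient: exploiting the fact that the compressional and shear far-field patterns see complementary components of the vector-valued Fourier coefficient $\widehat{\boldsymbol g}_{\bm l}$, so that a suitable linear combination of the two recovers the whole coefficient.

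First I would plug the admissible frequencies into \eqref{eq: up_far_E}--\eqref{eq: us_far_E}. With $\omega = c_\xi\omega_{\bm l}$ one has $k_\xi = \omega/c_\xi = \omega_{\bm l}$ and therefore $k_\xi^2/\omega^2 = 1/c_\xi^2$, so the scalar prefactors collapse to $\gamma_\xi/c_\xi^2$. For $\bm l\neq\bm 0$, since $\omega_{\bm l}\widehat{x}_{\bm l}\cdot y = \tfrac{2\pi}{a}|\bm l|\tfrac{\bm l}{|\bm l|}\cdot y = \tfrac{2\pi}{a}\bm l\cdot y$, the exponential becomes $\overline{\phi_{\bm l}(y)}$, and the constant matrices $\widehat{x}_{\bm l}\widehat{x}_{\bm l}^\top$ and $\mathbb I - \widehat{x}_{\bm l}\widehat{x}_{\bm l}^\top$ factor out of the integral. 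Invoking \eqref{eq: Fourier_coe} then yields
\begin{align*}
\mathbf{E}\left[\bm{u}_p^\infty(\widehat{x}_{\bm l};c_p\omega_{\bm l})\right] &= \frac{a^2\gamma_p}{c_p^2}\,\widehat{x}_{\bm l}\widehat{x}_{\bm l}^\top\,\widehat{\boldsymbol g}_{\bm l}, \\
\mathbf{E}\left[\bm{u}_s^\infty(\widehat{x}_{\bm l};c_s\omega_{\bm l})\right] &= \frac{a^2\gamma_s}{c_s^2}\left(\mathbb I - \widehat{x}_{\bm l}\widehat{x}_{\bm l}^\top\right)\widehat{\boldsymbol g}_{\bm l}.
\end{align*}

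The key step is to note that $\widehat{x}_{\bm l}\widehat{x}_{\bm l}^\top$ and $\mathbb I - \widehat{x}_{\bm l}\widehat{x}_{\bm l}^\top$ are complementary orthogonal projectors on $\mathbb R^2$, so that
\[
\frac{c_p^2}{a^2\gamma_p}\mathbf{E}\left[\bm{u}_p^\infty(\widehat{x}_{\bm l};c_p\omega_{\bm l})\right] + \frac{c_s^2}{a^2\gamma_s}\mathbf{E}\left[\bm{u}_s^\infty(\widehat{x}_{\bm l};c_s\omega_{\bm l})\right] = \widehat{\boldsymbol g}_{\bm l},
\]
which gives an explicit formula for every $\bm l\in\mathbb Z^2\setminus\{\bm 0\}$. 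This decoupling of the $p$- and $s$-parts is the conceptual heart of the elastic case; although it reduces to a one-line identity, it is the point I expect to require the most care — in particular one must check that the prefactors $\gamma_\xi/c_\xi^2$ are nonzero and that combining the two projections genuinely loses no information along any direction.

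Finally, for $\bm l = \bm 0$ I would follow the acoustic treatment of \eqref{eq: g0}: with $\omega_{\bm 0} = \tfrac{2\pi}{a}\xi$ and $\widehat{x}_{\bm 0} = (1,0)^\top$ the exponential equals $\overline{\phi_{\bm l_0}(y)}$ with $\bm l_0 = (\xi,0)\notin\mathbb Z^2$. Expanding $\boldsymbol g$ in its Fourier series and integrating term by term splits each of the two identities into a principal term proportional to $a^2\tfrac{\sin\xi\pi}{\xi\pi}\widehat{\boldsymbol g}_{\bm 0}$ (projected by $\widehat{x}_{\bm 0}\widehat{x}_{\bm 0}^\top$ when $\xi=p$ and by $\mathbb I - \widehat{x}_{\bm 0}\widehat{x}_{\bm 0}^\top$ when $\xi=s$) plus a correction $\sum_{\bm l\neq\bm 0}\widehat{\boldsymbol g}_{\bm l}\int_\Omega\phi_{\bm l}(y)\overline{\phi_{\bm l_0}(y)}\,\mathrm{d}y$ built from coefficients already found. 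Taking the same linear combination as above then produces a closed expression for $\widehat{\boldsymbol g}_{\bm 0}$ in terms of the measured data and the previously computed $\widehat{\boldsymbol g}_{\bm l}$, which finishes the proof. The only genuinely new element compared with the acoustic case is the projection bookkeeping; everything else is a direct transcription.
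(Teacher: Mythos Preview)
Your proposal is correct and follows essentially the same route as the paper: form the weighted combination $\frac{c_p^2}{a^2\gamma_{p,\bm l}}\mathbf{E}[\bm u_p^\infty]+\frac{c_s^2}{a^2\gamma_{s,\bm l}}\mathbf{E}[\bm u_s^\infty]$ so that the complementary projectors $\widehat{x}_{\bm l}\widehat{x}_{\bm l}^\top$ and $\mathbb I-\widehat{x}_{\bm l}\widehat{x}_{\bm l}^\top$ reassemble the identity, reducing the problem to the acoustic computation of the Fourier coefficient; the $\bm l=\bm 0$ case is then handled by the same Fourier expansion trick with the auxiliary index $\bm l_0=(\xi,0)$. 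The only cosmetic difference is that the paper combines the two expectations before integrating, whereas you first evaluate each expectation separately and then add; the content is identical.
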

 \begin{proof}
 To indicate the dependence on $\bm l$, denote by $\gamma_{\xi,\bm l}=\frac{\mathrm{e}^{\mathrm{i}\pi/4}}{\sqrt{8\pi k_{\xi,\bm l}}},\,\xi\in\{p,s\}.$
 From the far-field patterns \eqref{eq: up_far_E} and \eqref{eq: us_far_E}, we deduce that for $\bm{l}\in\mathbb{Z}^2\backslash\{\bm 0\},$
\begin{align*}
   &\quad\frac{c_p^2}{a^2\gamma_{p,{\bm l}}}\mathbf{E}\left[\bm{u}_p^\infty(\widehat{x}_{\bm l};c_p\omega_{\bm l})\right]+
   \frac{c_s^2}{a^2\gamma_{s,{\bm l}}}\mathbf{E}\left[{\bm u}_s^\infty(\widehat{x}_{\bm l};c_s\omega_{\bm l})\right]\\
   &=\frac{1}{a^2}\int_\Omega\widehat{x}\widehat{x}^\top\mathrm{e}^{-\mathrm{i}\omega_{\bm l}\widehat{x}_{\bm l}\cdot y}{\bm g}(y)\mathrm{d}y+\frac{1}{a^2}\int_\Omega(\mathbb{I}-\widehat{x}\widehat{x}^\top)\mathrm{e}^{-\mathrm{i}\omega_{\bm l}\widehat{x}_{\bm l}\cdot y}{\bm g}(y)\mathrm{d}y\\
 &=\frac{1}{a^2}\int_\Omega{\bm g}(y)\mathrm{e}^{-\mathrm{i}\omega_{\bm l}\widehat{x}_{\bm l}\cdot y}\mathrm{d}y\\
 &=\frac{1}{a^2}\int_\Omega{\bm g}(y)\mathrm{e}^{-\mathrm{i}\left(\frac{2\pi}{a}|{\bm l}|\right)\frac{\bm l}{|\bm l|}\cdot y}\mathrm{d}y\\
  &=\frac{1}{a^2}\int_\Omega{\bm g}(y)\mathrm{e}^{-\mathrm{i}\frac{2\pi}{a}{\bm l}\cdot y}\mathrm{d}y\\
  &=\widehat{\boldsymbol g}_{\bm l},
\end{align*} 
 which leads to the computational formula
\begin{align}\label{eq: coe_gl}
 \widehat{\bm g}_{\bm l}=\frac{1}{a^2}\left(\frac{c_p^2}{\gamma_{p, {\bm l}}}\mathbf{E}\left[\bm{u}_p^\infty(\widehat{x}_{\bm l};c_p\omega_{\bm l})\right]+
 \frac{c_s^2}{\gamma_{s,{\bm l}}}\mathbf{E}\left[{\bm u}_s^\infty(\widehat{x}_{\bm l};c_s\omega_{\bm l})\right]\right).
\end{align}
 
We now consider the case ${\bm l}={\bm0}.$ Similarly to the analysis above, we can obtain that 
 \begin{align*}
 &\quad\frac{c_p^2}{a^2\gamma_{p,{\bm 0}}}\mathbf{E}\left[\bm{u}_p^\infty(\widehat{x}_{\bm 0};c_p\omega_{\bm 0})\right]+
 \frac{c_s^2}{a^2\gamma_{s,{\bm 0}}}\mathbf{E}\left[{\bm u}_s^\infty(\widehat{x}_{\bm 0};c_s\omega_{\bm 0})\right]\\
  &=\frac{1}{a^2}\int_\Omega{\bm g}(y)\mathrm{e}^{-\mathrm{i}\omega_{\bm 0}\widehat{x}_{\bm 0}\cdot y}\mathrm{d}y\\
  &=\frac{1}{a^2}\int_\Omega\left(\widehat{\boldsymbol{g}}_{\boldsymbol 0}+\sum_{\bm{l}\in\mathbb{Z}^2\backslash\{\bm0\}}\widehat{\boldsymbol{g}}_{\boldsymbol l}\phi_{\boldsymbol l}(x)\right)\overline{\phi_{\bm0}(y)}\mathrm{d}y\\
  &=\frac{1}{a^2}\widehat{\bm{g}}_{\bm 0}\int_\Omega\overline{\phi_{\bm0}(y)}\mathrm{d}y
  +\frac{1}{a^2}\sum_{\bm{l}\in\mathbb{Z}^2\backslash\{\bm0\}}\widehat{\boldsymbol{g}}_{\boldsymbol l}\int_\Omega\phi_{\boldsymbol l}(x)\overline{\phi_{\bm0}(y)}\mathrm{d}y\\
  &=\frac{\sin\xi\pi}{\xi\pi}\widehat{\boldsymbol{g}}_{\boldsymbol 0}+\frac{1}{a^2}\sum_{\bm{l}\in\mathbb{Z}^2\backslash\{\bm0\}}\widehat{\boldsymbol{g}}_{\boldsymbol l}\int_\Omega\phi_{\bm l}(x)\overline{\phi_{\bm0}(y)}\mathrm{d}y,
 \end{align*}
 which gives rise to the computational formula for $\widehat{\bm g}_{\bm 0},$ i.e.,
 \begin{align*}
   \widehat{\bm g}_{\bm 0} & =\frac{\xi\pi}{a^2\sin\xi\pi}\Bigg(
   \frac{c_p^2}{a^2\gamma_{p,{\bm 0}}}\mathbf{E}\left[\bm{u}_p^\infty(\widehat{x}_{\bm 0};c_p\omega_{\bm 0})\right]+
 \frac{c_s^2}{a^2\gamma_{s,{\bm 0}}}\mathbf{E}\left[{\bm u}_s^\infty(\widehat{x}_{\bm 0};c_s\omega_{\bm 0})\right]\\
 &\quad -\sum_{\bm{l}\in\mathbb{Z}^2\backslash\{\bm0\}}\widehat{\boldsymbol{g}}_{\boldsymbol l}\int_\Omega\phi_{\boldsymbol l}(x)\overline{\phi_{\bm0}(y)}\mathrm{d}y
   \Bigg),
 \end{align*}
 which completes the proof of this theorem.
 \end{proof}
 
 The above theorem gives rise to the approximate for the mean function by truncated Fourier series
 \begin{align*}
 \boldsymbol{g}_N(x)=\sum_{|\bm{l}|_\infty\le N}\widehat{\boldsymbol{g}}_{\boldsymbol l}\phi_{\boldsymbol l}(x).
 \end{align*}
 \subsection{Recover the variance function}
 
 In this subsection, we shall devote to investigating the numerical method to recover the variance function. 
 The admissible angular frequency to recover the variance function is introduced as follows:
 \begin{definition}
   [Admissible angular frequency to recover the variance function]
   Take $\omega_0>0$ to be a sufficiently small positive constant, the admissible frequencies $\{\tau_{\bm l}\}$ for each ${\bm l}\in\mathbb{Z}^2$ is defined by
   \begin{align}\label{eq: tau_elastic_variance}
   \tau_{\bm l}=\frac{2\pi}{a}|\bm l|,\quad {\bm l}\in\mathbb{Z}^2.
   \end{align}
   The corresponding observation is defined by 
   \begin{align}\label{eq: xhat_elastic_variance}
\widehat{x}_{\bm l}:=
\left\{
\begin{aligned}
&\frac{\bm l}{|\bm l|},&&{\bm l}\ne{\bm0},\\
&{\bm a},&&{\bm l}={\bm0},
\end{aligned}
\right.
\end{align}
with ${\bm  a}=(a_1,a_2)\in\mathbb{S}$ being some unit vector chosen flexibly.
 \end{definition}
 By choosing admissible angular frequencies $\{\tau\}$ and some $\omega_0>0,$ we can derive the following results:
 \begin{theorem}
   Let $\omega_0>0$ be a small positive angular frequency, and select the admissible angular frequencies $\{\tau_{\bm l}\}$ as well as the observation directions $\widehat{x}_{\bm l}$ as \eqref{eq: tau_elastic_variance} and \eqref{eq: xhat_elastic_variance}, respectively. Then for each ${\bm l}\in\mathbb{Z}^2,$ the Fourier coefficients $\widehat{\bm \sigma}_{\bm l}$ of the deterministic sigma function $\bm\sigma$ in \eqref{eq: elastic_sigma} can be determined by the multi-frequency covariance data $\mathbf{C},$ i.e.,$\mathbf{C}\left[\bm{u}_\alpha^\infty(\widehat{x}_{\bm l};c_\alpha(\omega_0+\tau_{\bm l})),\bm{u}_\beta^\infty(\widehat{x}_{\bm l};c_\beta(\omega_0+\tau_{\bm l}))\right]$ with
$(\alpha,\beta)\in\{(p,p),(p,s),(s,p),(s,s)\}$
 \end{theorem}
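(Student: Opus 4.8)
The plan is to follow the template of the acoustic variance argument: strip the known, $\bm l$-independent normalising factors off the two elastic far-field patterns, use the incoherence of the white noise to collapse a double integral over $\Omega\times\Omega$ into a single integral over $\Omega$, and then combine the four covariance matrices linearly so that the angular projectors telescope to the identity and leave exactly $\widehat{\bm\sigma}_{\bm l}$. First I would write $\gamma(k)=\mathrm e^{\mathrm i\pi/4}/\sqrt{8\pi k}$, and for $\xi\in\{p,s\}$ introduce
\[
\bm V_\xi(\widehat x;k):=\frac{c_\xi^{2}}{\gamma(k)}\Big(\bm u_\xi^\infty(\widehat x;c_\xi k)-\mathbf E\big[\bm u_\xi^\infty(\widehat x;c_\xi k)\big]\Big).
\]
Evaluating $\bm u_\xi^\infty$ at angular frequency $c_\xi k$ makes the relevant $\xi$-wavenumber equal to $k$ and $k_\xi^2/\omega^2$ equal to $1/c_\xi^2$, so \eqref{eq: up_Eup}--\eqref{eq: us_Eus} give
\[
\bm V_p(\widehat x;k)=\int_\Omega M_p(\widehat x)\,\mathrm e^{-\mathrm ik\widehat x\cdot y}\,\bm\sigma(y)\,\mathrm dW_y,\qquad
\bm V_s(\widehat x;k)=\int_\Omega M_s(\widehat x)\,\mathrm e^{-\mathrm ik\widehat x\cdot y}\,\bm\sigma(y)\,\mathrm dW_y,
\]
with the (real, symmetric) angular projectors $M_p(\widehat x)=\widehat x\widehat x^{\top}$ and $M_s(\widehat x)=\mathbb I-\widehat x\widehat x^{\top}$. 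Since the normalising factor is explicit, every matrix second moment $\mathbf E[\bm V_\alpha\,\overline{\bm V_\beta}^{\top}]$ is a known multiple of the corresponding covariance datum $\mathbf C[\bm u_\alpha^\infty,\bm u_\beta^\infty]$.

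Next I would compute the matrix second moment of $\bm V_\alpha$ at effective wavenumber $\omega_0+\tau_{\bm l}$ paired with $\overline{\bm V_\beta}$ at effective wavenumber $\omega_0$. Expanding the double stochastic integral and invoking the matrix analogue of \eqref{eq: property}, namely $\mathbf E\big[\bm\sigma(y)\dot W_y(\bm\sigma(z)\dot W_z)^{\top}\big]=\bm\sigma^2(y)\,\delta(y-z)$ — valid because $W_1,W_2$ are independent, whence $\mathbf E[\dot W_y\dot W_z^{\top}]=\mathbb I\,\delta(y-z)$, and because $\bm\sigma$ is diagonal — the $\Omega\times\Omega$ integral collapses onto the diagonal and the two exponentials combine into the single phase $\mathrm e^{-\mathrm i(\omega_0+\tau_{\bm l})\widehat x\cdot y}\mathrm e^{\mathrm i\omega_0\widehat x\cdot y}=\mathrm e^{-\mathrm i\tau_{\bm l}\widehat x\cdot y}$, exactly the cancellation used in the acoustic proof. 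As $M_\alpha(\widehat x_{\bm l})$ and $M_\beta(\widehat x_{\bm l})$ are independent of the integration variable, they come out of the integral, so that
\[
\mathbf E\big[\bm V_\alpha(\widehat x_{\bm l};\omega_0+\tau_{\bm l})\,\overline{\bm V_\beta(\widehat x_{\bm l};\omega_0)}^{\top}\big]
=M_\alpha(\widehat x_{\bm l})\Big(\int_\Omega\bm\sigma^2(y)\,\mathrm e^{-\mathrm i\tau_{\bm l}\widehat x_{\bm l}\cdot y}\,\mathrm dy\Big)M_\beta(\widehat x_{\bm l}).
\]
By \eqref{eq: tau_elastic_variance}--\eqref{eq: xhat_elastic_variance}, $\tau_{\bm l}\widehat x_{\bm l}=\tfrac{2\pi}{a}\bm l$, and since $\mathrm{supp}\,\bm\sigma^2\subset\Omega$ the inner integral equals $\int_\Omega\bm\sigma^2(y)\overline{\phi_{\bm l}(y)}\,\mathrm dy=a^2\,\widehat{\bm\sigma}_{\bm l}$, a diagonal matrix.

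Finally I would add the four resulting identities over $(\alpha,\beta)\in\{(p,p),(p,s),(s,p),(s,s)\}$ and use $M_p(\widehat x)+M_s(\widehat x)=\mathbb I$ on both sides:
\[
\sum_{(\alpha,\beta)}\mathbf E\big[\bm V_\alpha\,\overline{\bm V_\beta}^{\top}\big]
=a^2\big(M_p(\widehat x_{\bm l})+M_s(\widehat x_{\bm l})\big)\,\widehat{\bm\sigma}_{\bm l}\,\big(M_p(\widehat x_{\bm l})+M_s(\widehat x_{\bm l})\big)=a^2\,\widehat{\bm\sigma}_{\bm l}.
\]
Undoing the normalisation of the first step then writes $\widehat{\bm\sigma}_{\bm l}$ as an explicit linear combination, with known coefficients, of the four measured covariances $\mathbf C[\bm u_\alpha^\infty,\bm u_\beta^\infty]$, which is the claimed reconstruction formula. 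For $\bm l=\bm 0$ one has $\tau_{\bm 0}=0$, so both patterns are recorded at the single frequency $\omega_0$ and the computation above yields $\widehat{\bm\sigma}_{\bm 0}=a^{-2}\int_\Omega\bm\sigma^2$; here the telescoping step is insensitive to the direction, which is precisely why $\widehat x_{\bm 0}$ may be taken to be an arbitrary unit vector $\bm a$.

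The hardest part will be the tensorial bookkeeping rather than any analytic estimate: one must check that the vector white noise still produces a clean scalar Dirac mass multiplying $\bm\sigma^2$ (this is where independence of $W_1,W_2$ and diagonality of $\bm\sigma$ are used), that the angular projectors genuinely pass through the spatial integral with no $y$-dependent matrix left inside, and that the particular set of four pairs $(\alpha,\beta)$ is exactly what makes $M_p+M_s$ collapse to $\mathbb I$ on both the left and the right — a single auto-correlation such as $\mathbf C[\bm u_p^\infty,\bm u_p^\infty]$ would only return the scalar $\widehat x_{\bm l}^{\top}\widehat{\bm\sigma}_{\bm l}\widehat x_{\bm l}$ and would fail to separate the two diagonal entries for directions near $45^\circ$. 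A secondary point requiring care is the placement of the complex conjugate, which must sit on the factor evaluated at the lower frequency $\omega_0$ so that the residual phase is $\mathrm e^{-\mathrm i\tau_{\bm l}\widehat x\cdot y}$ and not trivial.
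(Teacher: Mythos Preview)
Your proposal is correct and follows essentially the same route as the paper: normalise the $p$- and $s$-far-field patterns by $c_\xi^2/\gamma$, use the white-noise property to collapse the double integral and leave the phase $\mathrm e^{-\mathrm i\tau_{\bm l}\widehat x_{\bm l}\cdot y}$, and exploit $M_p+M_s=\mathbb I$ to strip the angular projectors. The only cosmetic difference is that the paper first forms $\bm U=\bm U_p+\bm U_s$ (so the projectors cancel before any covariance is taken) and then computes the single covariance $\mathbf C[\bm U(\widehat x_{\bm l};\omega_0+\tau_{\bm l}),\bm U(\widehat x_{\bm l};\omega_0)]$, whereas you compute the four cross-covariances and sum afterwards; by bilinearity of $\mathbf C$ the two are identical.
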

 \begin{proof}
For a fixed angular frequency $\omega>0,$  denote
\begin{align}\label{eq: UpUs}
\bm{u}_\xi(\widehat{x},\omega)=\frac{c_\xi^2}{\gamma_{\xi}}\bm{u}_\xi^\infty(\widehat{x},c_\xi\omega),
\quad\xi\in\{p,s\},
\end{align}
then it holds that 
\begin{align}\label{eq: Up}
  {\bm U}_p(\widehat{x},\omega_0+\tau_{\bm l})&=\widehat{x}\widehat{x}^\top\int_\Omega\mathrm{e}^{-\mathrm{i}(\omega_0+\tau_{\bm l})\widehat{x}\cdot y}\left(
  \boldsymbol{g}(y)+\boldsymbol{\sigma}(y)\dot{W}_y
   \right)\mathrm{d}y,\\\label{eq: Us}
  {\bm U}_s(\widehat{x},\omega_0+\tau_{\bm l})&=\left(\mathbb{I}-\widehat{x}\widehat{x}^\top\right)\int_\Omega\mathrm{e}^{-\mathrm{i}(\omega_0+\tau_{\bm l})\widehat{x}\cdot y}\left(
  \boldsymbol{g}(y)+\boldsymbol{\sigma}(y)\dot{W}_y
   \right)\mathrm{d}y.
\end{align}
This together with \eqref{eq: up_Eup}--\eqref{eq: us_Eus} leads to
\begin{align*}
  {\bm U}_p(\widehat{x}_{\bm l};\omega_0+\tau_{\bm l})-\mathbf{E}\left[{\bm U}_p(\widehat{x}_{\bm l};\omega_0+\tau_{\bm l})\right] & =\widehat{x}_{\bm l}\widehat{x}_{\bm l}^\top\int_\Omega\mathrm{e}^{-\mathrm{i}(\omega_0+\tau_{\bm l})\widehat{x}_{\bm l}\cdot y}\boldsymbol{\sigma}(y)\mathrm{d}W_y,\\
  {\bm U}_s(\widehat{x}_{\bm l};\omega_0+\tau_{\bm l})-\mathbf{E}\left[{\bm U}_s(\widehat{x}_{\bm l};\omega_0+\tau_{\bm l})\right] & =\left(\mathbb{I}-\widehat{x}_{\bm l}\widehat{x}_{\bm l}^\top\right)\int_\Omega\mathrm{e}^{-\mathrm{i}(\omega_0+\tau_{\bm l})\widehat{x}_{\bm l}\cdot y}\boldsymbol{\sigma}(y)\mathrm{d}W_y.
\end{align*}
Denote ${\bm U}={\bm U}_p+{\bm U}_s$, then it holds that
\[
{\bm U}(\widehat{x}_{\bm l},\omega_0+\tau_{\bm l})-\mathbf{E}\left[{\bm U}(\widehat{x}_{\bm l},\omega_0+\tau_{\bm l})\right]
=\int_\Omega\mathrm{e}^{-\mathrm{i}(\omega_0+\tau_{\bm l})\widehat{x}_{\bm l}\cdot y}\boldsymbol{\sigma}(y)\mathrm{d}W_y.
\]
Then we can obtain that 
\begin{align}\nonumber
  &\quad\frac{1}{a^2}\mathbf{C}\left[{\bm U}(\widehat{x}_{\bm l};\omega_0+\tau_{\bm l}),{\bm U}(\widehat{x}_{\bm l};\omega_0)\right]\\\nonumber
  &=\frac{1}{a^2}\mathbf{E}\left[\left({\bm U}\left(\widehat{x}_{\bm l};\omega_0+\tau_{\bm l}\right)-\mathbf{E}\left[{\bm U}\left(\widehat{x}_{\bm l};\omega_0+\tau_{\bm l}\right)\right]\right)\overline{\left({\bm U}(\widehat{x}_{\bm l};\omega_0)-
  \mathbf{E}\left[{\bm U}(\widehat{x}_{\bm l};\omega_0)\right]\right)}\right]\\\nonumber
  &=\frac{1}{a^2}\int_\Omega\int_\Omega\mathbf{E}\left[{\bm\sigma}(y)\dot{W}_y\overline{{\bm\sigma}(z)\dot{W}_z}\right]\mathrm{e}^{-\mathrm{i}(\omega_0+\tau_{\bm l})\widehat{x}_{\bm l}\cdot y}\mathrm{e}^{\mathrm{i}\omega_0\widehat{x}_{\bm l}\cdot y}\mathrm{d}y\mathrm{d}z\\
  &=\frac{1}{a^2}\int_\Omega{\bm\sigma}^2(y)\mathrm{e}^{-\mathrm{i}\tau_{\bm l}\widehat{x}_{\bm l}\cdot y}\mathrm{d}y=\widehat{\bm \sigma}_{\bm l},\label{eq: coe_sigmal}
\end{align}
where we have taken \eqref{eq: property} into consideration. Specifically, for ${\bm l}={\bm 0},$ 
$$
\widehat{\bm \sigma}_{\bm 0}=\frac{1}{a^2}\mathbf{C}\left[{\bm U}({\bm a};\omega_0),{\bm U}({\bm a};\omega_0)\right].
$$
 \end{proof}

Now, we obtain the approximate for the variance function by 
$$
 \bm{\sigma}_N^2(x)=\sum_{|\bm{l}|_\infty\le N}\widehat{\bm{\sigma}}_{\bm l}\phi_{\bm l}(x).
$$


 \subsection{Numerical simulations}
 In this subsection, several numerical examples will be presented to illustrate the performance of the proposed Fourier method in reconstructing the statistics for the elastic source. Similarly to the numerical implementation for the acoustic wave, the integral domain $\Omega$ is set to be $[-0.5,0.5]\times[-0.5,0.5].$ The total number of realizations is $10^6.$  To test the stability, the noisy far field data is given by 
 $$
 {\bm u}_{\xi,\delta}^\infty={\bm u}_\xi^\infty+\delta r_3|{\bm u}_\xi^\infty|\mathrm{e}^{\mathrm{i}\pi r_4},
 $$
 with $\xi\in\{p,s\},$ and $r_\ell,\,\ell=3,4$ being two uniformly distributed random numbers ranging from $-1$ to 1, and $\delta>0$ is the noise level. 
 In the following, the Lam\'{e} constants are chosen as $\lambda=\mu=1.$ The angular frequencies are chosen to be 
 \begin{align*}
\omega_{\bm l}=\left\{
\begin{aligned}
&{2\pi}|\bm l|,&&1\le|{\bm l}|_\infty\le 20\\
&{2\pi}\times10^{-3},&&{\bm l}={\bm 0}
\end{aligned}
\right.
,\ \
\tau_{\bm l}=\frac{2\pi}{a}|\bm l|,\quad 1\le|{\bm l}|_\infty\le 20.
 \end{align*}
 
Under these admissible angular frequencies, the artificial far-field data can be given by
\begin{align*}
    \left\{
     \mathbf{E}\left[\bm{u}_{\xi,\delta}^\infty(\widehat{x};c_\xi\omega_{\bm l})\right]: \xi\in\{p,s\}
     \right\},
\end{align*}
and 
\begin{align*} 
\left\{  \mathbf{C}\left[\bm{u}_{\alpha,\delta}^\infty(\widehat{x};c_\alpha(\omega_0+\tau_{\bm l})),\bm{u}_{\beta,\delta}^\infty(\widehat{x};c_\beta(\omega_0+\tau_{\bm l}))\right]
:(\alpha,\beta)\in\{(p,p),(p,s),(s,p),(s,s)\}
\right\},
\end{align*}
 where $\omega_0>0$ is some small angular frequency chosen flexibly. In this section, we take $\omega_0=0.001.$
 
 Once the measurements are available, the Fourier coefficients $\widehat{\bm g}_{\bm l}^\delta$ and $\widehat{\bm \sigma}_{\bm l}^\delta $ can be obtained through \eqref{eq: coe_gl} and \eqref{eq: coe_sigmal} provided that the measurements are changed into their noisy counterpart.
 
 We now consider the reconstruction of two random sources with the following pairs of mean and variance 
\begin{align*}
& \begin{cases}
g_1(x_1, x_2)=\exp(-200((x_1-0.01)^2+(x_2-0.12)^2))\\
\qquad\qquad\quad\ -100(x_2^2-x_1^2)\exp(-90(x_1^2+x_2^2)),\\
\sigma_1(x_1,x_2)=\frac{1}{2}g_1(x_1,x_2), 
\end{cases} \\
&  \begin{cases}
g_2(x_1, x_2)=1500x_1^2 x_2\exp(-50(x_1^2+x_2^2)),\\
\sigma_2(x_1,x_2)=\frac{1}{2}\exp(-200((x_1-0.01)^2+(x_2-0.12)^2)).
\end{cases} 
\end{align*}
 
The exact source functions are plotted in \Cref{fig: source}.
\begin{figure}
    \centering
    \subfigure[$g_1$]{\includegraphics[width=0.4\linewidth]{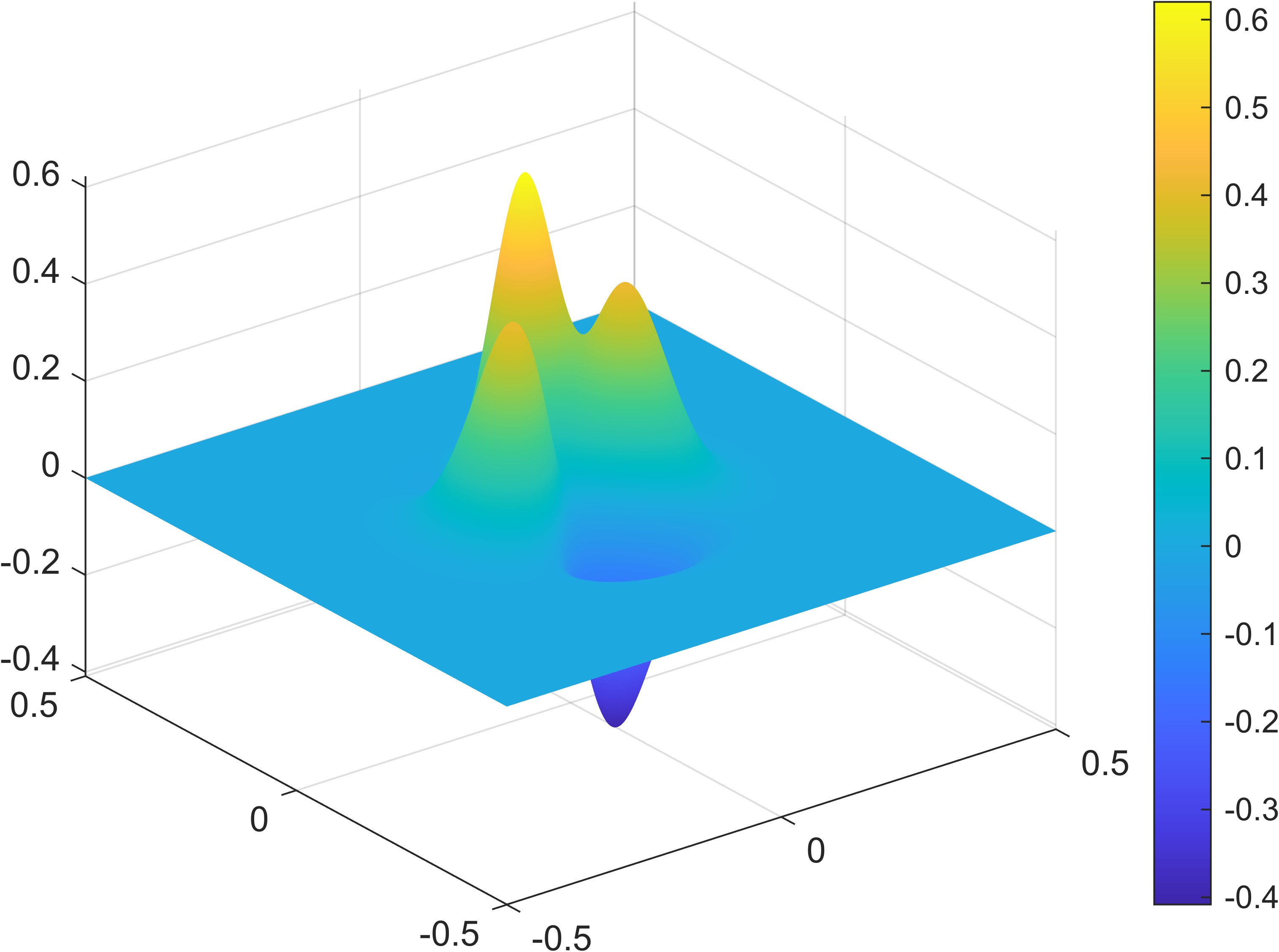}}\qquad
    \subfigure[$g_2$]{\includegraphics[width=0.4\linewidth]{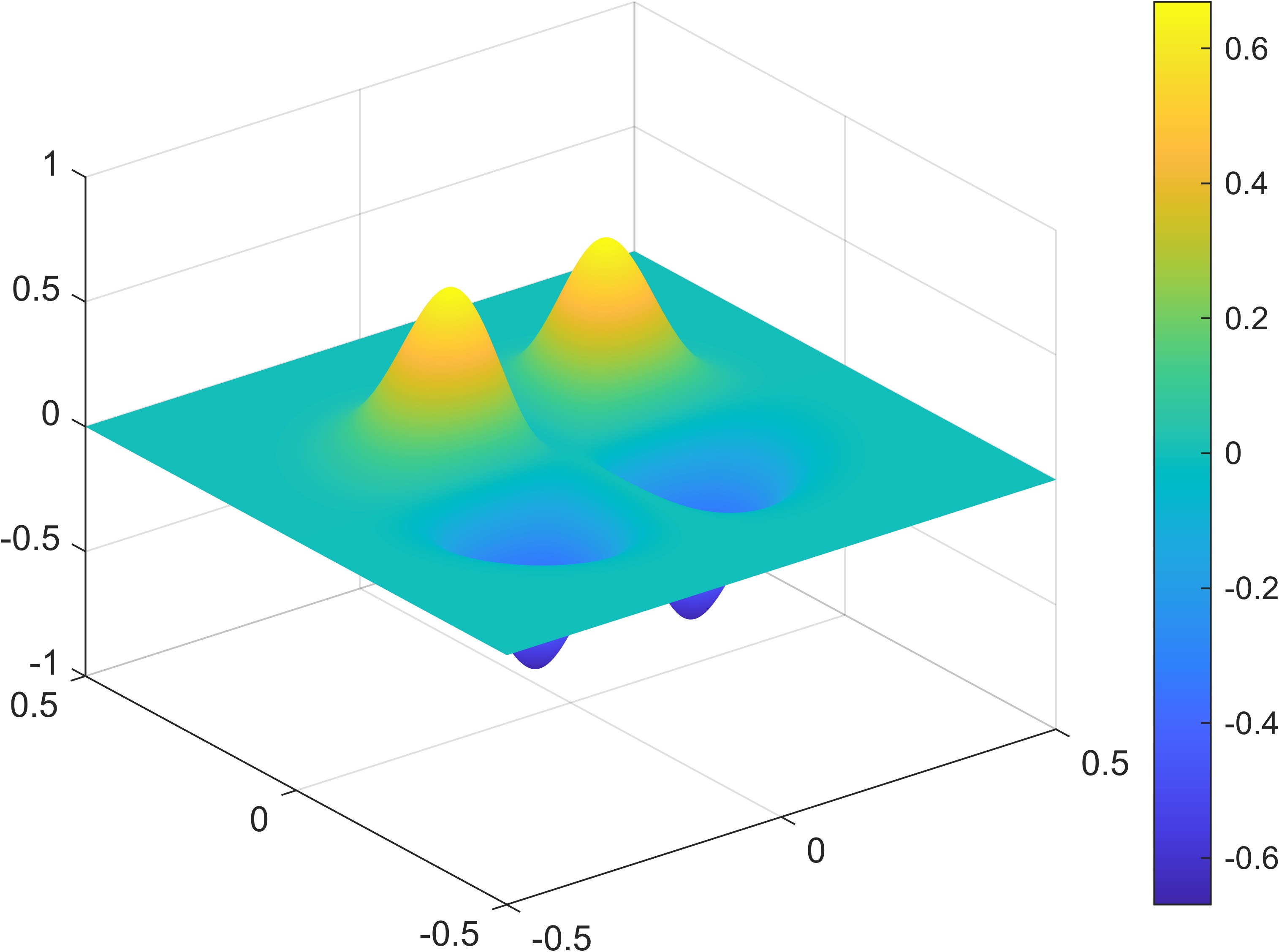}}
    \subfigure[$\sigma_1^2$]{\includegraphics[width=0.4\linewidth]{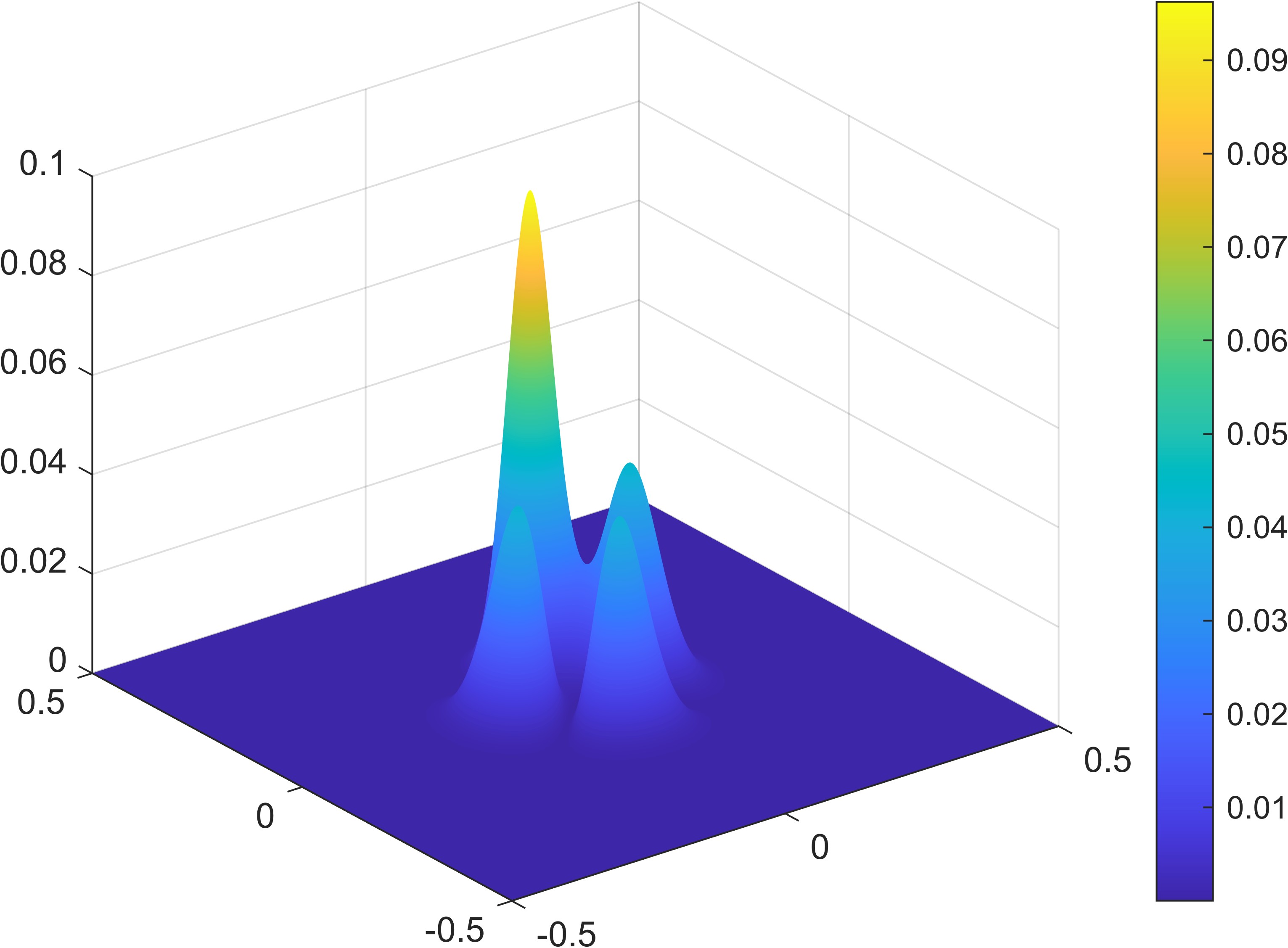}}\qquad
    \subfigure[$\sigma_2^2$]{\includegraphics[width=0.4\linewidth]{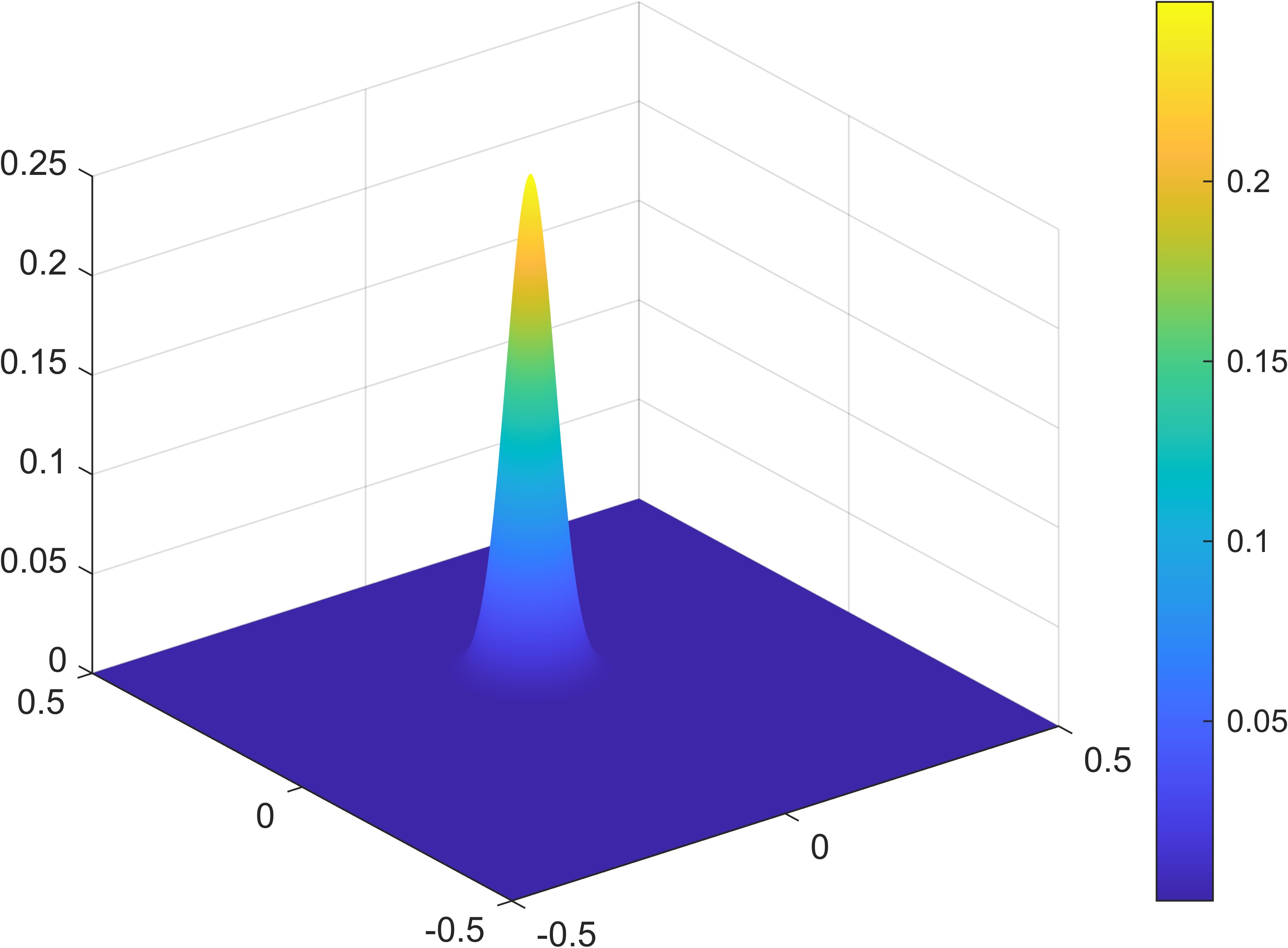}}
     \caption{The exact mean functions and variance functions.}\label{fig: source}
\end{figure}
 
The reconstruction with $\delta = 5\%$ is displayed in \Cref{fig: elastic1}. \Cref{fig: source} and \Cref{fig: elastic1} illustrate that all the means and variances are well recovered. By taking $\delta = 20\%,$ we plot the reconstruction in \Cref{fig: elastic2}. By comparing \Cref{fig: elastic1} with \Cref{fig: elastic2}, we find that the reconstruction quality increases as the noise level decreases.
 
 Nevertheless, our method performs well in reconstructing the elastic source.
 By taking different noise levels, we list the reconstruction errors in \Cref{tab: elastic}. We can observe from \Cref{tab: elastic} that our method can realize an accurate reconstruction, which further illustrates the powerful ability to recover the elastic source function. Though the $H_1$ reconstruction error of the variance function ${\bf \sigma}^2$ is up to $6.32\%$ when the noise level is $10\%,$ we can see from \Cref{fig: elastic2} that even $\delta=20\%,$ the reconstruction is acceptable, except that there may be some fluctuations in function values in flat regions.
 
\begin{table}
  \centering
  \begin{tabular}{ccccc}
    \toprule
    $\delta$ & $0.5\%$ & $1\%$ & $5\%$ &$10\%$ \\
    \midrule
    \vspace{0.2cm}
    ${\|{\bm g}^\delta_N-{\bm g}\|_0}/{\|{\bm g}\|_0}$&$1.27\%$&$1.27\%$&$1.80\%$&$3.13\%$\\
    \vspace{0.2cm}
    ${\|{\bm g}^\delta_N-{\bm g}\|_1}/{\|{\bm g}\|_1}$&$3.48\%$&$3.51\%$&$4.27\%$&$5.77\%$\\
    \vspace{0.2cm}
    ${\|{\bm \sigma}^{2,\delta}_N-{\bm\sigma}^2\|_0}/{\|{\bm\sigma}^2\|_0}$&$0.58\%$&$0.59\%$&$1.75\%$&$3.28\%$\\
    ${\|{\bm \sigma}^{2,\delta}_N-{\bm\sigma}^2\|_1}/{\|{\bm\sigma}^2\|_1}$&$1.72\%$&$1.79\%$&$3.34\%$&$6.32\%$\\
    \bottomrule
  \end{tabular}
  \caption{The relative errors of the reconstruction of ${\bm g}_N$ and ${\bm\sigma}_N^2$ for different noise levels.}\label{tab: elastic}
\end{table}

\begin{figure}
  \centering
   \subfigure[$g_{N,1}$]{\includegraphics[width=0.45\linewidth]{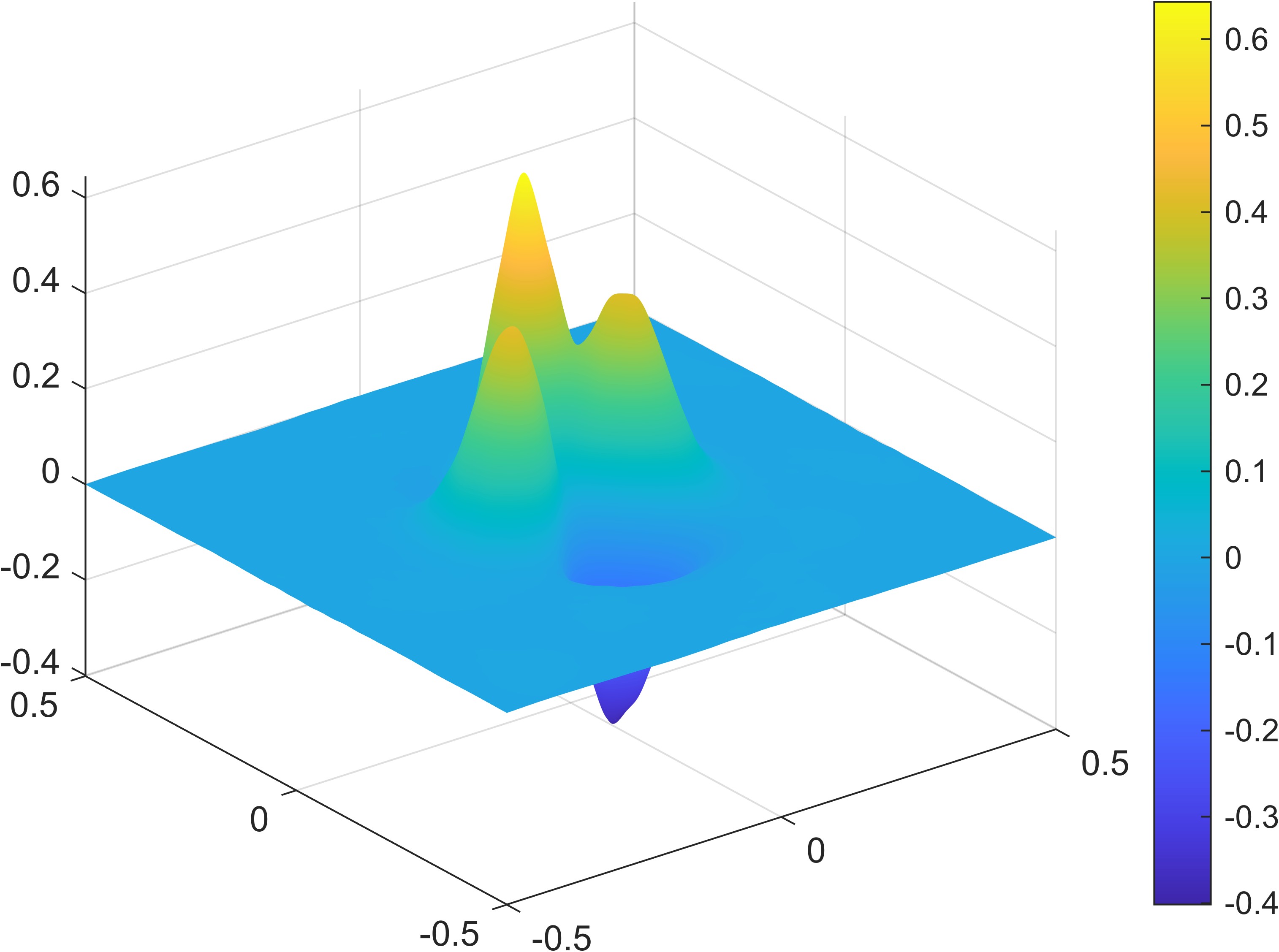}}\qquad 
   \subfigure[$g_{N,2}$]{\includegraphics[width=0.45\linewidth]{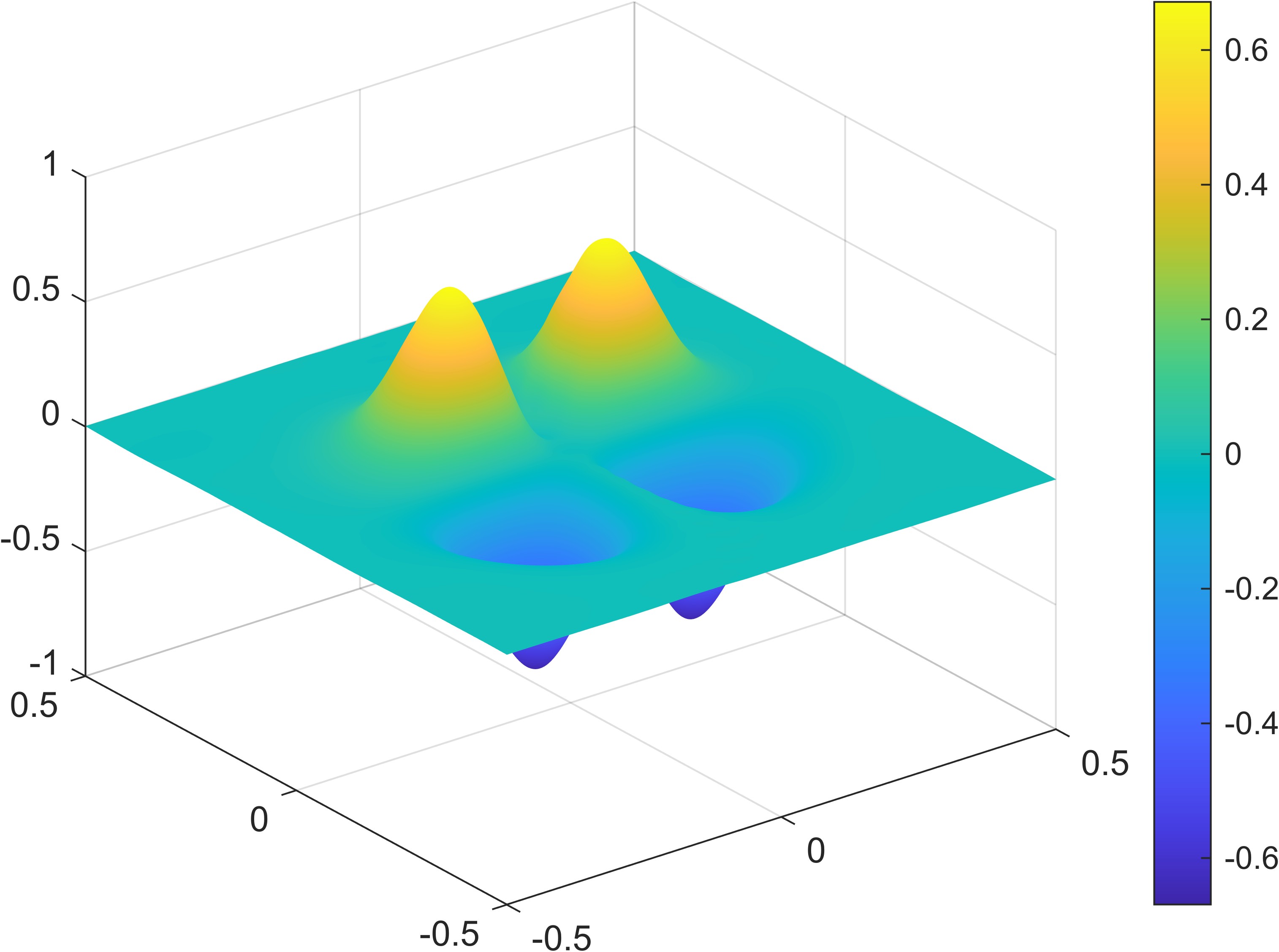}}
   \subfigure[$\sigma_{N,1}^2$]{\includegraphics[width=0.45\linewidth]{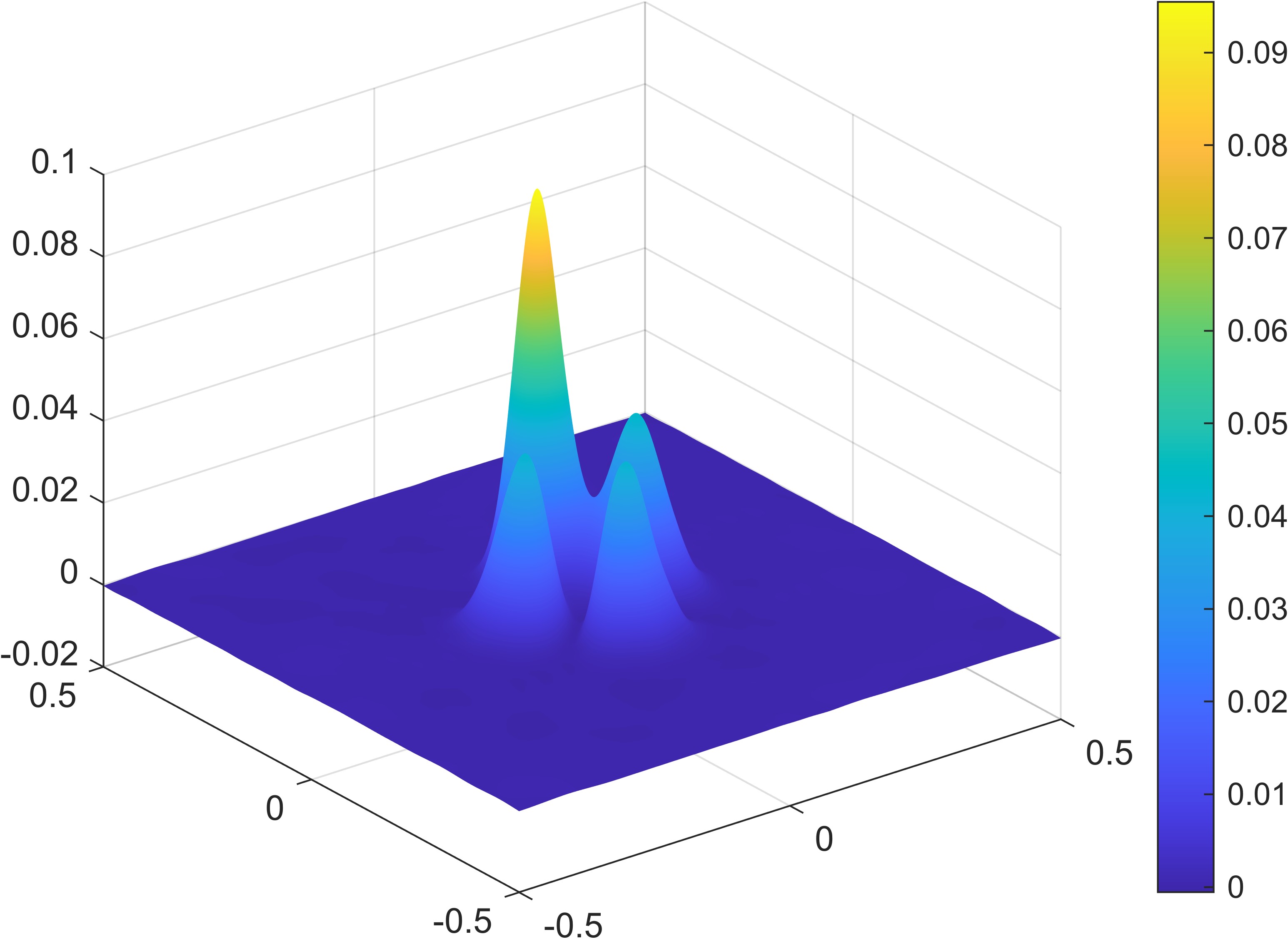}}\qquad 
   \subfigure[$\sigma_{N,2}^2$]{\includegraphics[width=0.45\linewidth]{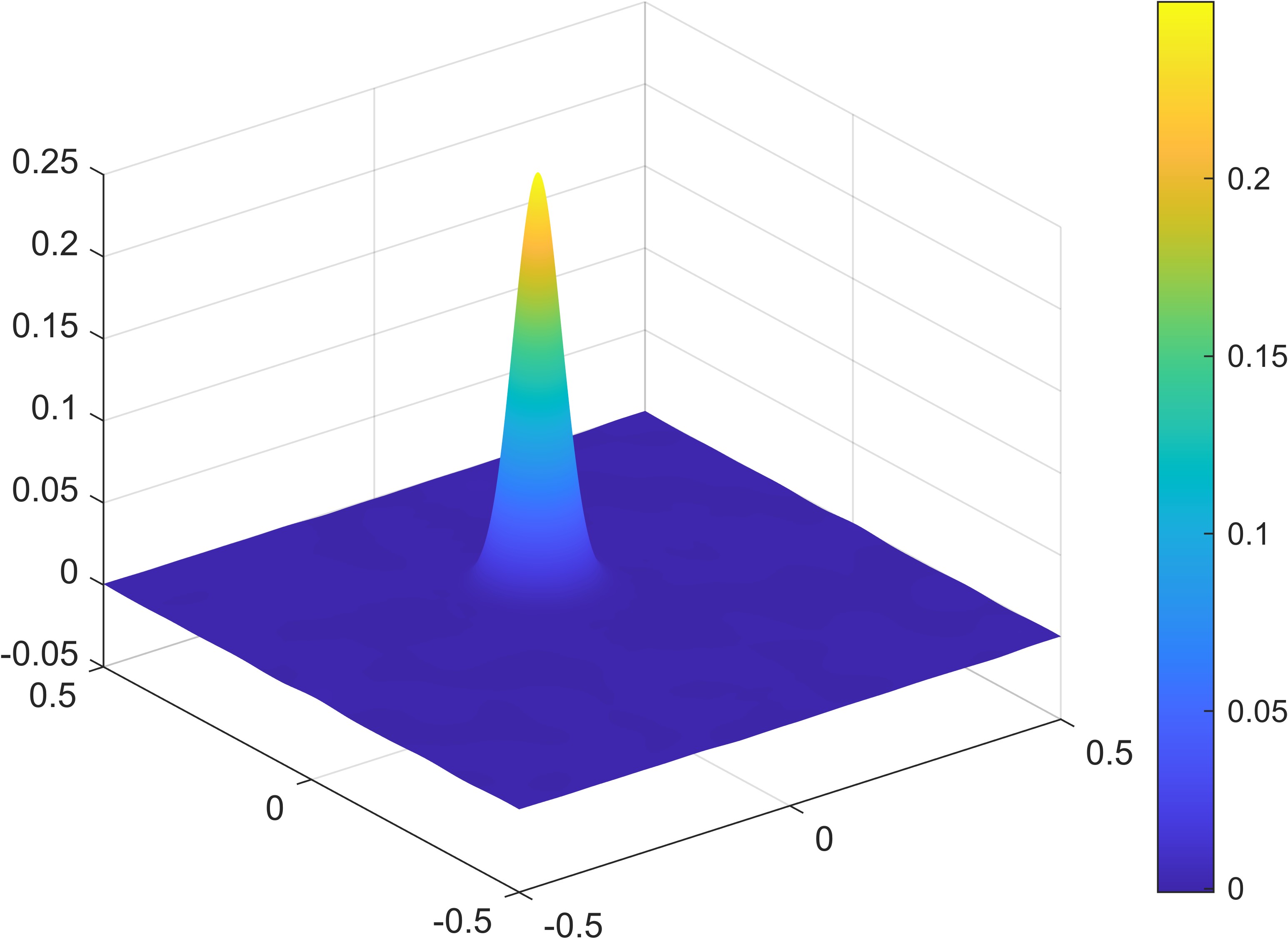}}
  \caption{The reconstructed mean functions and variance functions for $\delta = 5\%$.}\label{fig: elastic1}
\end{figure}

\begin{figure}
    \centering
    \subfigure[$g_{N,1}$]{\includegraphics[width=0.45\linewidth]{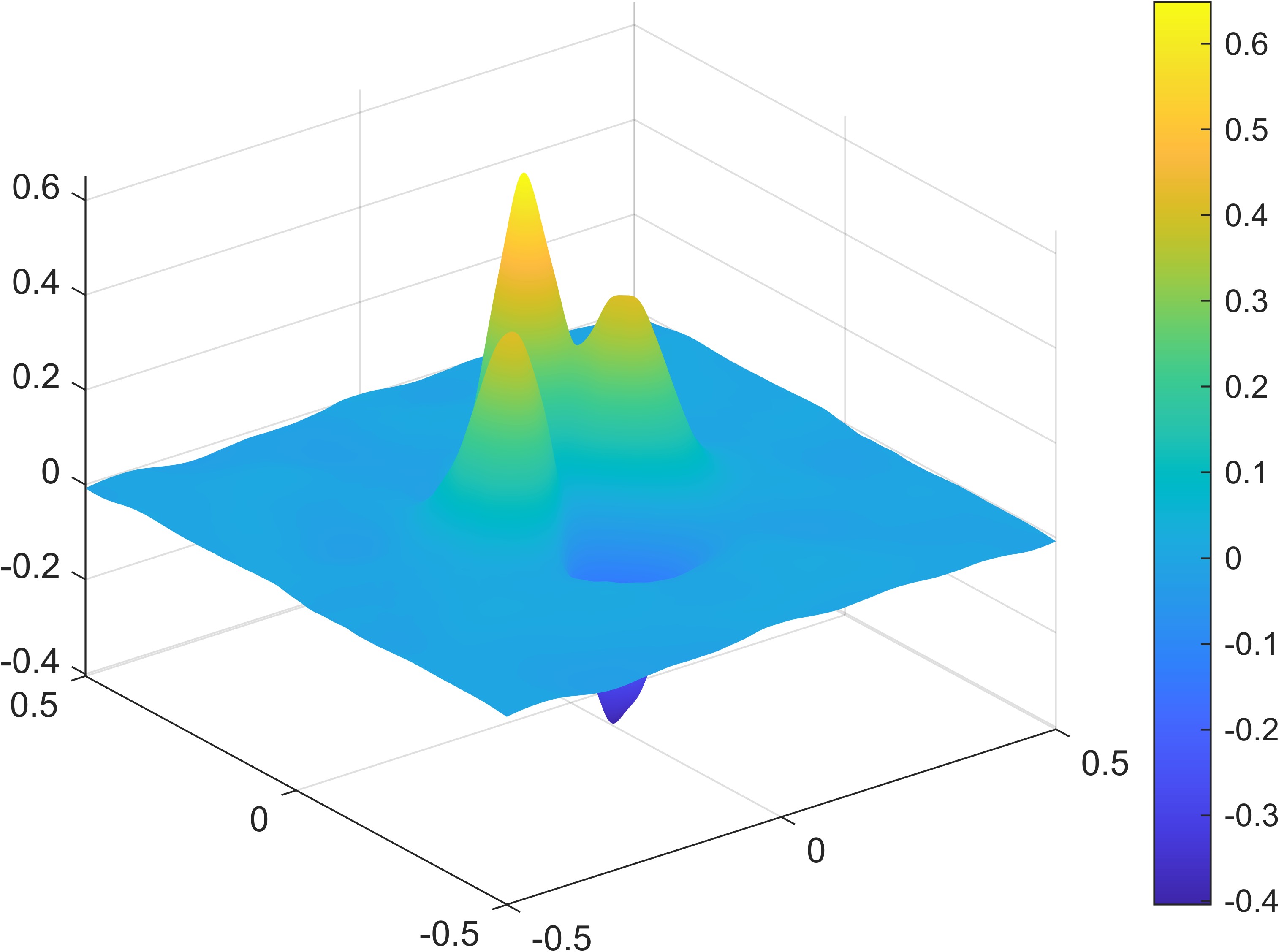}}\qquad 
    \subfigure[$g_{N,2}$]{\includegraphics[width=0.45\linewidth]{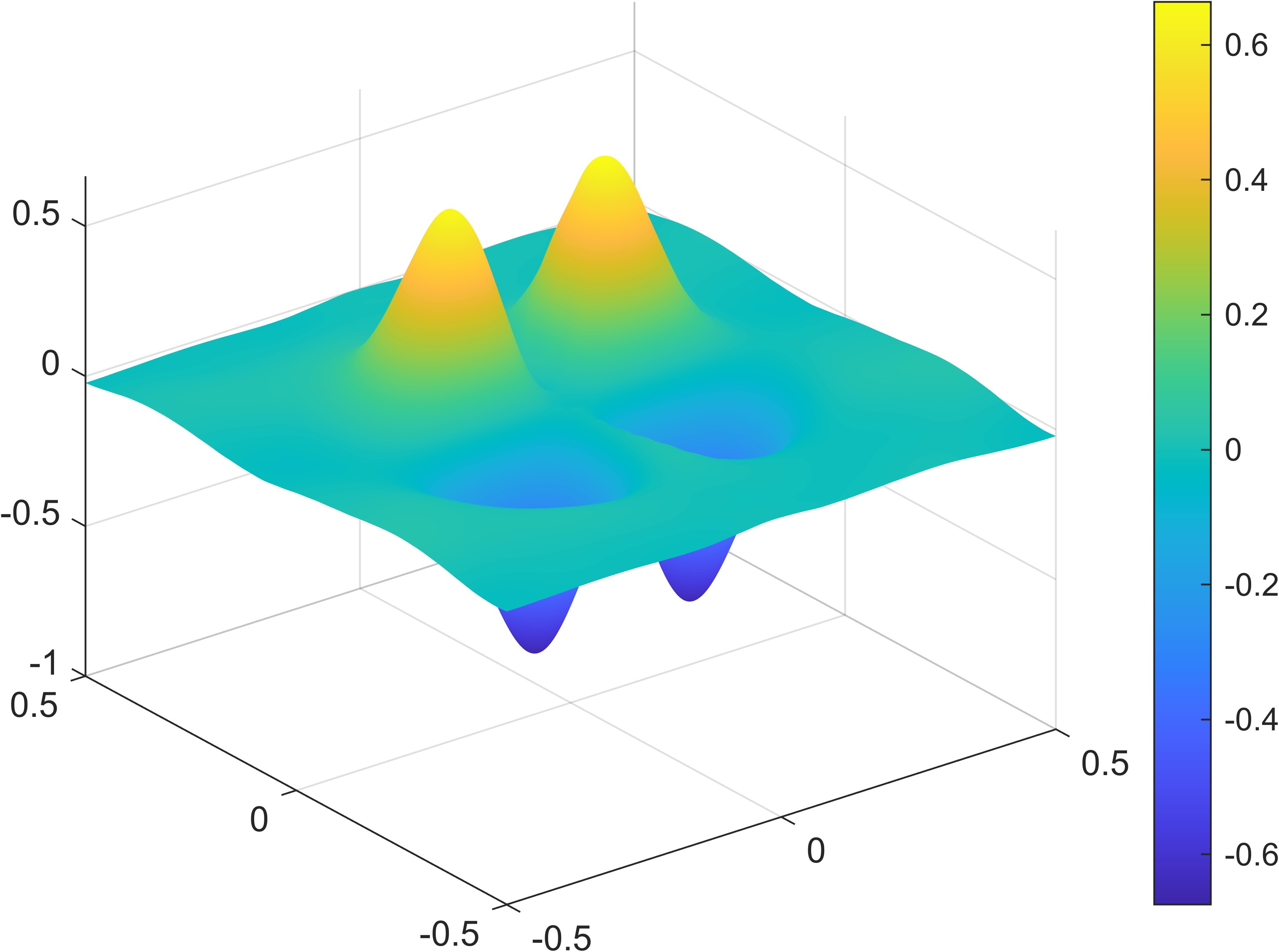}}
    \subfigure[$\sigma_{N,1}^2$]{\includegraphics[width=0.45\linewidth]{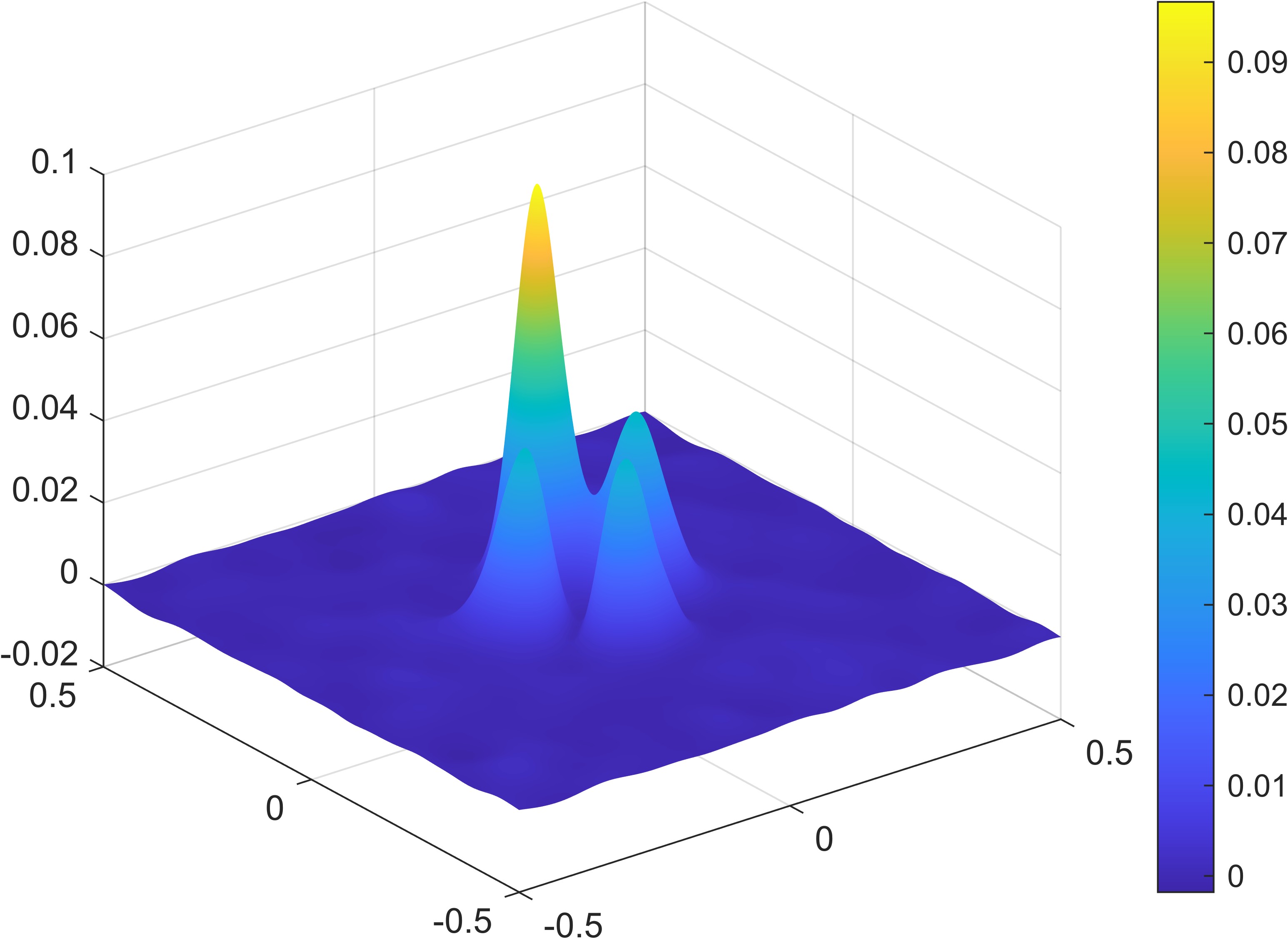}}\qquad 
    \subfigure[$\sigma_{N,2}^2$]{\includegraphics[width=0.45\linewidth]{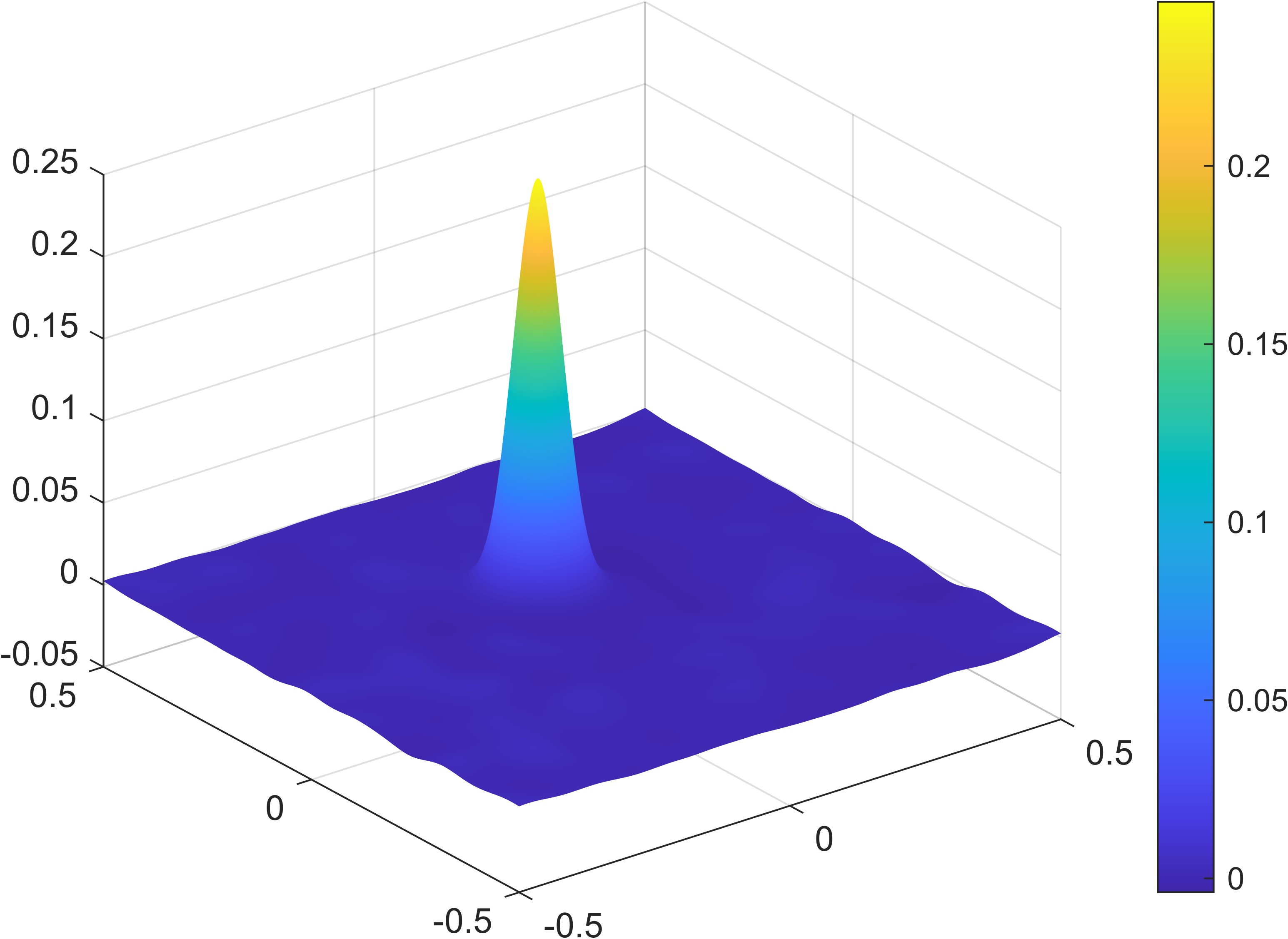}}
    \caption{The reconstructed mean functions and variance functions for $\delta = 20\%$.}\label{fig: elastic2}
\end{figure}

\section{Conclusion}\label{sec: conclusion}
 
We present a study on the inverse random source scattering problems for the acoustic and the elastic wave, where the source is driven by an additive white noise. By expanding the mean function and the variance function via the Fourier series respectively, we establish the relation between the statistical quantities and the Fourier coefficients, resulting in an easy-to-implement and effective method. A promising feature of the current work is that no iterative solver is involved and the reconstruction is highly accurate. Numerical examples in both the acoustic and elastic cases are presented to illustrate the performance of the proposed method.





\bibliographystyle{plain}
\bibliography{references}
\end{document}